\newtheorem{thrm}{Theorem}
\newtheorem{lemma}{Lemma}
\newtheorem{probl}{Problem}
\title[A polyhedron without self-intersections, all of whose dihedral angles change]
{A flexible polyhedron without self-intersections in Euclidean 3-space, 
all of whose dihedral angles change during a flex}
\author{V.A. Alexandrov}
\address{Sobolev Institute of Mathematics, Koptyug ave., 4, Novosibirsk, 630090, 
Russia and Department of Physics, Novosibirsk State University, Pirogov str., 2, Novosibirsk, 630090, Russia\newline 
\indent \href{https://orcid.org/0000-0002-6622-8214}{\rm{ORCID iD: 0000-0002-6622-8214}}}
\email{alex@math.nsc.ru}
\author{E.P. Volokitin}
\address{Sobolev Institute of Mathematics, Koptyug ave., 4, Novosibirsk, 630090, 
Russia and Department of Physics, Novosibirsk State University, Pirogov str., 2, Novosibirsk, 630090, Russia\newline
\indent \href{https://orcid.org/0000-0002-2646-7800}{\rm{ORCID iD: 0000-0002-2646-7800}}}
\email{volok@math.nsc.ru}
\begin{document}

\begin{abstract}
We construct a sphere-homeomorphic flexible self-intersection free polyhedron 
in Euclidean 3-space such that all its dihedral angles change during some flex 
of this polyhedron.
The constructed polyhedron has 26 vertices, 72 edges and 48 faces. 
To study its properties, we use symbolic computations in the Wolfram Mathematica 
software system.
\par
\textit{Key words}: Euclidean 3-space, flexible polyhedron, dihedral angle, 
small diagonal of a polyhedron, segment-triangle intersection algorithm. 
\par
\textit{UDK}: 514.1
\par
\textit{MSC}: 52C25, 65D18, 68U05  
\end{abstract}
\maketitle
\section{Introduction}\label{sec1}

In this article, a compact polyhedral surface in Euclidean 3-space $\mathbb{R}^3$
with or without boundary, all of whose faces are triangles, and which a priori 
can have self-intersections of any type is called a \textit{polytope}.

A polyhedron $P$ is called \textit{flexible} if its spatial shape can be changed
continuously (i.e., without jumps) only by changing its dihedral angles, i.e., if $P$ 
can be included in a continuous family of polyhedra $\{P_t\}_{t\in [\alpha, \beta)}$ 
such that $P=P_{\alpha}$ and, for any $t\in (\alpha, \beta)$, $P_{\alpha}$ and $P_t$ 
are combinatorially equivalent, their corresponding faces are congruent, while
$P_{\alpha}$ and $P_t$ themselves are not congruent. 
We call such a family $\{P_t\}_{t\in [\alpha, \beta)}$ a \textit{flex} of~$P$, 
and we call $t$ the \textit{parameter} of the flex.

The first examples of flexible polyhedra without boundary in 
$\mathbb{R}^3$ were constructed by R. Bricard in 1897 in \cite{Br97}.
All of them have self-intersections and are combinatorially equivalent to a 
regular octahedron (which is why they are called flexible octahedra). 
Moreover, in \cite{Br97} a classification of all flexible octahedra in 
$\mathbb{R}^3$ is given.
Nowadays, flexible octahedra are commonly called Bricard octahedra. 
One can read more about them, e.g., in \cite{Br97, Le67, Al10, GGLS21, Mi23}.
 
The theory of flexible polyhedra began to flourish after R. Connelly 
constructed a flexible polyhedron in $\mathbb{R}^3$ in 1977 in 
\cite{Co77} that is homeomorphic to a sphere and has no self-intersections.
As the theory of flexible polyhedra developed, they turned out to have many 
remarkable properties; for example, for every orientable flexible polyhedron 
without boundary in $\mathbb{R}^3$, its integral mean curvature \cite{Al85}, 
volume \cite{Sa96, CSW97, Sa98, Ga18}, and every Dehn invariant \cite{GI18} 
are preserved diring the flex.

At the same time, there are still many interesting open questions in the theory 
of flexible polyhedra.
These include the following problem posed by I.Kh. Sabitov:

\begin{probl}\label{probl1}
Is there a flexible polyhedron in $\mathbb{R}^3$, without boundary and without 
self-intersections, for which all dihedral angles change during the flex?
\end{probl} 

Its formulation can be found, e.g., in \cite[p.~182]{Sh15} and 
\cite[Problem 1.3]{Ga18}.

The absence of self-intersections is fundamentally important in Problem~\ref{probl1}.
Indeed, it is easy to see (see Lemma~\ref{lemma1} below) that every dihedral 
angle of any Bricard octahedron changes during the flex.
But all Bricard octahedra have self-intersections.

Theorem~\ref{thrm1} gives a positive answer to Problem~\ref{probl1} and is the 
main result of this article.

\begin{thrm}\label{thrm1}
In Euclidean 3-space $\mathbb{R}^3$, there exists a polyhedron $\mathscr{P}$ 
with the following properties:
\begin{enumerate}
\renewcommand{\theenumi}{\roman{enumi}}
\item $\mathscr{P}$ has only triangular faces, has no self-intersections and 
      is homeomorphic to the sphere $\mathbb{S}^2$, 
\item there exists a flex of $\mathscr{P}$ such that none of its 
      dihedral angles remains constant.
\end{enumerate}
\end{thrm}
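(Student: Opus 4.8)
The plan is to exhibit $\mathscr{P}$ entirely explicitly, by writing down its $26$ vertices as continuous (in fact real-analytic) functions $v_i(t)$ of a single parameter $t$ on a small interval, fixing once and for all a list of $48$ triangular faces, and then checking (i) and (ii) directly from these formulas, offloading the heavy algebra to Mathematica as the authors announce. A sanity check on the combinatorics is immediate: for a closed triangulated surface one needs $2E=3F$, and indeed $2\cdot 72 = 144 = 3\cdot 48$, while $V-E+F = 26-72+48 = 2$ is the Euler characteristic of the sphere.

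To find such a family, I would start from a flexible seed whose dihedral angles are all known to vary. A Bricard octahedron is the natural choice, because Lemma~\ref{lemma1} guarantees that each of its dihedral angles changes during the flex. The trouble is that every Bricard octahedron self-intersects and is only an immersed sphere, so the heart of the construction is a surgery in the spirit of Connelly and Steffen: excise the faces responsible for the self-intersections and replace them by embedded flexible ``caps'' assembled from further Bricard-type pieces, organizing the gluing so that the result is an embedded topological sphere with the stated combinatorics. Each surgery creates new edges, and the crux of Sabitov's problem is precisely that such auxiliary edges tend to carry a constant dihedral angle; so I would retain enough free parameters in the construction to later eliminate every such coincidence.

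Granted the family $\{P_t\}$, property (ii) splits in two. First, that it is a genuine flex: all $72$ edge lengths $|v_i(t)-v_j(t)|$ must be independent of $t$, so that corresponding faces stay congruent, while the $P_t$ are pairwise non-congruent. Second, that no dihedral angle stays put: for each edge $e$ I would compute its dihedral angle $\theta_e(t)$ (or an algebraic surrogate) and verify that it is non-constant, which for a real-analytic function amounts to $d\theta_e/dt \not\equiv 0$. This is exactly where the free parameters pay off — I would fix them generically and let Mathematica confirm, symbolically, that this holds simultaneously for all $72$ edges. Note that non-constancy of even one angle already forces the $P_t$ to be non-congruent, so the second check subsumes the non-triviality of the flex.

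The hard part, and the reason the problem long resisted a positive answer, is property (i): the absence of self-intersections for \emph{every} $t$ in the flex interval, not merely at one instant. This is a global assertion about $48$ continuously moving closed triangles. I would test each pair of faces sharing neither a vertex nor an edge and certify that the corresponding triangles remain disjoint throughout; the segment-triangle intersection primitive named in the abstract is the right tool, reducing each test to checking that certain polynomial expressions in $t$ keep a fixed sign on the whole interval. The essential subtlety is that sampling $t$ at finitely many instants proves nothing — non-intersection must be established uniformly — so the sign certification has to be carried out symbolically (via resultants, Sturm sequences, or exact interval arithmetic) rather than numerically. It is here that the bulk of the computation, and the real risk of error, is concentrated.
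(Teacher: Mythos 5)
Your plan correctly identifies the seed (a Bricard octahedron, with Lemma~\ref{lemma1} guaranteeing all its angles move) and the Connelly--Steffen surgery strategy, and it even names the crux: auxiliary edges created by such surgery tend to carry constant dihedral angles. But your proposed remedy --- ``retain enough free parameters and fix them generically'' --- cannot work, and this is a genuine gap. In the Steffen polyhedron the constancy of the angle at the edge $T_1T_4$ is structural, not a coincidence: both faces flanking that edge, $T_1T_2T_4$ and $T_1T_3T_4$, belong to the rigid tetrahedral part $\mathscr{T}$, which is motionless throughout the flex, so \emph{no} choice of edge lengths within the same combinatorics makes that angle vary. The paper's actual idea is to change the combinatorics so that the offending edge ceases to be an edge at all: modify Steffen's polyhedron so the angle at $T_1T_4$ equals exactly $90^{\circ}$ (by setting $|T_1T_4|=\sqrt{334}$, giving $\mathscr{M}$ of \S~\ref{sec4}), then glue the four copies $\mathscr{M}$, $K(\mathscr{M})$, $K^2(\mathscr{M})$, $K^3(\mathscr{M})$ rotated by $90^{\circ}$ about the $z$-axis, so the flanking faces cancel in pairs and $T_1T_4$ becomes a diagonal of $\mathscr{P}$, about which the question of a constant dihedral angle no longer arises. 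The resulting $\mathscr{P}$ admits a 4-parameter flex (the four $C_2$-type vertices move independently), from which a symmetric 1-parameter subflex is selected; 56 of the 72 angles are then nonconstant by Lemma~\ref{lemma1} exactly as you anticipate, while the remaining 16 are handled by a first-order velocity computation at $t=0$ (Lemmas~\ref{lemma6} and~\ref{lemma7}) --- in spirit your derivative test $d\theta_e/dt\not\equiv 0$, but carried out infinitesimally via the relations (\ref{eq6})--(\ref{eq7}) rather than by computing $\theta_e(t)$ as a function.

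A second, smaller error: your claim that property (i) must be certified \emph{uniformly} over the flex interval (via resultants or Sturm sequences), and that checking finitely many instants ``proves nothing,'' misreads what the theorem demands. A flex need only exist on \emph{some} interval, which we are free to shrink. The paper verifies non-intersection at $t=0$ alone: its segment-triangle algorithm produces finitely many determinants, all nonzero, and each nonvanishing is an open condition, so by continuity the same sign pattern --- hence embeddedness --- persists for all $t$ sufficiently close to $0$. Your uniform sign certification would be required only for a prescribed interval fixed in advance; as written, your proposal concentrates ``the bulk of the computation and the real risk of error'' at a step where a one-instant symbolic check plus continuity suffices.
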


As we were finishing this article, A.A. Gaifullin sent us the graduate work
of O.A. Zaslavsky \cite{Za19}, defended under his supervision in 2019 at the 
Chair of Higher Geometry and Topology of the Mathematics and Mechanics Department
at Lomonosov Moscow State University.
It turned out that the polyhedron $\mathscr{P}$ constructed by us from the 
formulation of Theorem~\ref{thrm1} had already been constructed in \cite{Za19};
moreover, it was constructed specifically to answer Problem~\ref{probl1}.
It also turned out that O.A. Zaslavsky argues for the absence of self-intersections 
in $\mathscr{P}$ differently than we do, and does not substantiate the fact that all
dihedral angles change.
The results of O.A. Zaslavsky's graduate work \cite{Za19} have not yet been published
in a refereed journal.
Therefore, we decided to publish our proof of Theorem~\ref{thrm1}.

The plan of the article is as follows. 
In~\S~\ref{sec2} we clarify the terminology and recall 
the construction and properties of the Bricard's octahedron of the first type.
In~\S~\ref{sec3} we recall the information we need about the construction and 
properties of the Steffen's flexible polyhedron.
In~\S~\ref{sec4} we modify the Steffen's polyhedron so that it can be used to 
construct the polyhedron $\mathscr{P}$ of Theorem~\ref{thrm1}.
In~\S~\ref{sec5} we propose an algorithm for solving the problem of whether a 
given polyhedron has self-intersections; by applying its computer implementation 
to the modified Steffen's polyhedron we verify that it has no self-intersections.
In~\S~\ref{sec6} we explicitly construct a polyhedron $\mathscr{P}$ of 
Theorem~\ref{thrm1} and, using the algorithm of \S~\ref{sec5} and the
\textit{Mathematica} software system \cite{Wo99}, verify that it has no 
self-intersections.
In~\S~\ref{sec7} we construct a special 1-parameter flex of $\mathscr{P}$ in which 
none of dihedral angles of $\mathscr{P}$ remains constant; in this section, we again 
make use of \textit{Mathematica}.
Finally, in~\S~\ref{sec8} we gather the results of previous sections to obtain a proof
of Theorem~\ref{thrm1} and formulate open problems related to Problem~\ref{probl1}.

\section{Clarification of terminology and 
Bricard octahedron of type 1}\label{sec2} 
 
Since the article will feature polyhedra with self-intersections, it makes sense 
to clarify the terminology associated with this now.
 
Let $M$ be an abstract two-dimensional manifold glued from a finite number of 
Euclidean triangles $\Delta_k$, $k=1,\dots, n$.
It is possible that $M$ has a non-empty boundary. 
Let $f: M\to\mathbb{R}^3$ be a continuous mapping whose restriction to each 
$\Delta_k$ is a linear isometric embedding.
Then we call $f(M)$ a \textit{polyhedron} or \textit{polyhedral surface} in 
$\mathbb{R}^3$.
If $\delta\subset M$ coincides with one of $\Delta_k$'s, or with its side or vertex, 
then we call $f(\delta)$ a \textit{face}, \textit{edge}, or, respectively, a
\textit{vertex} of the polyhedron $f(M)$.  
\textit{A diagonal} of a polyhedron is a straight line segment that connects two of 
its vertices, but is not an edge.
A diagonal is called \textit{small} if its ends belong to adjacent faces.

We say that $f(M)$ \textit{has no self-intersections}, if $f: M\to\mathbb{R}^3$ 
is injective.
We call $x\in f(M)$ a \textit{point of self-intersection} of $f(M)$, 
if its complete preimage $f^{-1}(x)\subset M$ consists of more than one point.

\textit{An octahedron} is a polyhedron $f(M)$ (convex or non-convex, self-intersecting 
or not) such that the abstract manifold $M$ is combinatorially equivalent to 
the regular convex octahedron shown in Fig.~\ref{f1}.
Unless otherwise stated, we use notations of Fig.~\ref{f1} for the vertices of an
arbitrary octahedron.
\begin{figure}[h]
\begin{center}
\includegraphics[width=0.3\textwidth]{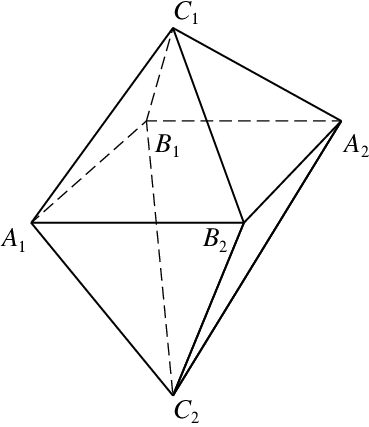}
\end{center}
\caption{Designations of the vertices of a regular tetrahedron.}\label{f1}
\end{figure}

For our purposes it will be sufficient to recall the construction of only 
the Bricard octahedron of type 1.
For the constructions of Bricard octahedra of types 2 and 3, we refer the reader to 
the articles \cite{Br97, Le67, Al10, GGLS21, Mi23} and the literature cited therein.

Consider a disk-homeomorphic polyhedron $\mathscr{D}$ in $\mathbb{R}^3$ 
consisting of four triangles $A_1B_1C_2$, $B_1A_2C_2$, $A_2B_2C_2$, and $B_2A_1C_2$.
Its boundary is the closed spatial broken line $A_1B_1A_2B_2$, from which we require 
that the lengths of its opposite sides are equal to each other, i.e., we require that 
$|A_1B_1|=|A_2B_2|$ and $|B_1A_2|=|B_2A_1|$ (see the left part of Fig.~\ref{f2}).
\begin{figure}[h]
\begin{center}
\includegraphics[width=0.9\textwidth]{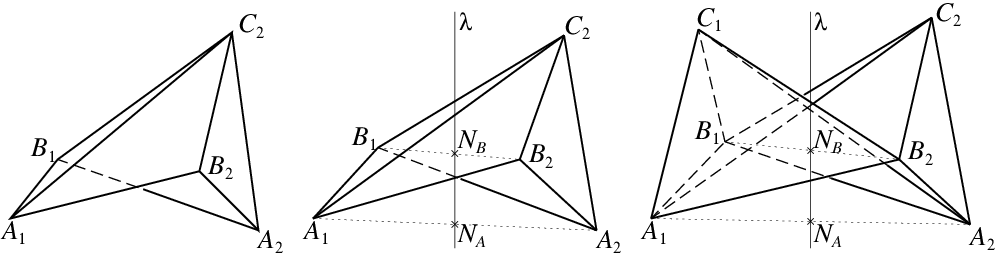}
\end{center}
\caption{Polyhedron $\mathscr{D}$ and Bricard octahedron of type 1 $\mathscr{B}$.}\label{f2}
\end{figure}

Let $N_A$ denote the midpoint of the segment $A_1A_2$ and $N_B$ denote the 
midpoint of the segment $B_1B_2$ (see the central part of Fig.~\ref{f2}).
If $N_A\neq N_B$, then we denote by $\lambda$ the line passing through $N_A$ and 
$N_B$ (see the central part of Fig.~\ref{f2}).
If $N_A=N_B$, then we denote by $\lambda$ the line passing through the point 
$N_A=N_B$ perpendicular to the plane containing the segments $A_1A_2$ and $B_1B_2$.

First of all, we note that the quadrilateral $A_1B_1A_2B_2$ is mapped onto itself 
under the rotation of the entire space $\mathbb{R}^3$ around the line $\lambda$ 
by $180^{\circ}$.
This is obvious if $N_A=N_B$.
If $N_A\neq N_B$, then the equality of triangles $A_1B_1B_2$ and $A_2B_1B_2$ 
implies $|A_1N_B|=|A_2N_B|$ (see the central part of Fig.~\ref{f2}).
Therefore, the triangle $A_1A_2N_B$ is isosceles.
This means that its median $N_AN_B$ is also its height, i.e. the line 
$\lambda = N_AN_B$ is perpendicular to the line $A_1A_2$.
Therefore, when the entire space is rotated around $\lambda$ by $180^{\circ}$, 
the points $A_1$ and $A_2$ exchange places. 
Similarly, starting from the triangles $A_1A_2B_1$ and $A_1A_2B_2$ we conclude 
that the line $\lambda = N_AN_B$ is perpendicular to the line $B_1B_2$, and 
therefore the points $B_1$ and $B_2$ also exchange places when rotating around 
$\lambda$ by $180^{\circ}$.
Thus, we have proved that under such a rotation the quadrilateral $A_1B_1A_2B_2$ 
is mapped into itself.

Now we glue the polyhedron $\mathscr{D}$ and its image under the rotation of 
$\mathbb{R}^3$ around $\lambda$ by $180^{\circ}$ along the sides of the 
quadrilateral $A_1B_1A_2B_2$ (see the right part of Fig.~\ref{f2}).
The resulting polyhedron is a Bricard octahedron of type 1.
We denote it by $\mathscr{B}$.
The image of the point $C_2$ under the rotation of $\mathbb{R}^3$ around $\lambda$ 
by $180^{\circ}$ is denoted by $C_1$.

It follows directly from the above construction that $\mathscr{B}$ is combinatorially
equivalent to an octahedron, has self-intersections and allows a one-parameter flex
(recall that, by definition, the Euclidean distance between at least some two 
vertices of the polyhedron is not preserved during the flex).

To prove Theorem~\ref{thrm1} we need one well-known property of the Bricard octahedra 
(of all types, not just type 1). 
We formulate and prove it in the form of Lemma~\ref{lemma1}.

\begin{lemma}\label{lemma1}
Let a Bricard octahedron be located in $\mathbb{R}^3$ in such a way that none of its 
dihedral angles is equal to 0 or $\pi$.
Then the value of each of its dihedral angles does not remain constant during the flex.
\end{lemma}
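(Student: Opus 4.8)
The plan is to reduce the statement to a single edge and then to force a rigidity contradiction. Since the combinatorial octahedron is edge-transitive, a relabelling of the vertices that carries one edge onto another leaves invariant both the hypotheses (being a flexing Bricard octahedron with no dihedral angle equal to $0$ or $\pi$) and the desired conclusion; so it suffices to prove that the dihedral angle along one fixed edge, say $A_1B_1$, cannot stay constant. The two faces meeting along $A_1B_1$ are the triangles $A_1B_1C_1$ and $A_1B_1C_2$. If the dihedral angle between them were constant throughout the flex, then the tetrahedron $A_1B_1C_1C_2$ would keep all six of its edge lengths fixed: its four slanted edges and the base $A_1B_1$ are edges of the octahedron, while the remaining segment $C_1C_2$ is pinned by the constant dihedral angle. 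Hence the diagonal $|C_1C_2|$ would be constant during the whole flex, and I would derive a contradiction from this.

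First I would pass to a moving frame that fixes the segment $C_1C_2$: placing $C_1$ at the origin and $C_2$ on the positive $z$-axis uses up all rigid motions except the rotation about the $z$-axis, and, $|C_1C_2|$ being constant, the points $C_1,C_2$ then become genuinely fixed. Because $|XC_1|$ and $|XC_2|$ are edges and hence constant, each equatorial vertex $X\in\{A_1,B_1,A_2,B_2\}$ is confined to a fixed circle about the $z$-axis: its cylindrical height $h_X$ and radius $r_X$ are determined, and only its angular coordinate $\psi_X$ can vary. The four equatorial edge lengths then read $|XY|^2=(h_X-h_Y)^2+r_X^2+r_Y^2-2r_Xr_Y\cos(\psi_X-\psi_Y)$ for the four consecutive pairs $(A_1,B_1),(B_1,A_2),(A_2,B_2),(B_2,A_1)$, so each of the four angular differences $\psi_X-\psi_Y$ must keep a constant cosine.

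The heart of the argument is to upgrade ``constant cosine'' to ``constant angle'', and this is exactly where the hypothesis that no dihedral angle equals $0$ or $\pi$ enters. I would record a geometric dictionary. If an equatorial vertex lay on the axis, then a face through it would contain the whole axis, hence the opposite apex, making that face coplanar with a neighbouring face, i.e. forcing a dihedral angle along an equatorial edge to be $0$ or $\pi$; so the hypothesis gives $r_X\neq0$ for every $X$. Likewise $\psi_X-\psi_Y\in\{0,\pi\}$ holds exactly when $X,Y,C_1,C_2$ are coplanar, which is the same as the dihedral angle along the edge $XY$ being $0$ or $\pi$; so the hypothesis gives $\psi_X-\psi_Y\notin\{0,\pi\}$ for each consecutive pair at the initial position. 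Since $\cos$ is locally injective away from $0$ and $\pi$, a continuous family with constant cosine cannot leave its initial branch, and each difference $\psi_X-\psi_Y$ stays constant near the initial parameter. Thus all four angular coordinates differ by fixed constants, the whole vertex set is determined up to a single rotation about the $z$-axis, and the octahedron is rigid near the initial instant — contradicting the fact that a flex produces polyhedra non-congruent to the initial one arbitrarily close to it.

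The main obstacle, and the only delicate point, is precisely this non-degeneracy bookkeeping: one must check that the positions where the cosine equation fails to pin down the angle (a vertex on the axis, or a coplanar diagonal tetrahedron) are exactly the positions barred by the hypothesis. I expect the cleanest route is the coplanarity dictionary above, used together with the fact that every face of the octahedron is a genuine non-degenerate triangle, which keeps triples such as $A_1,B_1,C_2$ non-collinear and thereby converts ``coplanar tetrahedron'' into ``dihedral angle $0$ or $\pi$''. Once this is in place, the three diagonals $|A_1A_2|$, $|B_1B_2|$, $|C_1C_2|$ are handled identically by relabelling the three pairs of opposite vertices, and edge-transitivity upgrades the single-edge conclusion to all twelve dihedral angles, completing the proof.
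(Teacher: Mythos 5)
Your proof is correct, and after the shared opening move it takes a genuinely different route from the paper's. Both arguments start the same way: reduce to the edge $A_1B_1$ (you via edge-transitivity, the paper via ``without loss of generality''), and observe that a constant dihedral angle there makes the tetrahedron $A_1B_1C_1C_2$ rigid, pinning the diagonal $|C_1C_2|$. From there the paper stays synthetic and combinatorial: the two rigid tetrahedra $A_1B_1C_1C_2$ and $A_1B_2C_1C_2$ are adjacent along the face $A_1C_1C_2$, which is non-degenerate precisely by the no-$0$-or-$\pi$ hypothesis, so the dihedral angles of the octahedron at $A_1B_2$, $A_1C_1$, $A_1C_2$ are sums or differences of constant tetrahedral angles and hence constant; thus constancy at one edge of a vertex forces constancy at all edges of that vertex, propagates through the connected edge graph to all twelve edges, pins all three diagonals, and contradicts the definition of a flex. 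You instead make the single pinned diagonal do all the work: in a frame adapted to $C_1C_2$, each equatorial vertex rides a fixed circle about the axis, the four equatorial edge lengths freeze the cosines of the angular differences, and the hypothesis enters exactly where you said it would --- nonzero radii and differences off $\{0,\pi\}$, where cosine is locally injective --- so all angular offsets stay constant and the motion is a global rotation about the axis, contradicting non-congruence. Your ``coplanarity dictionary'' is the same fact the paper uses once, in its parenthetical showing $A_1C_1C_2$ is non-degenerate. What your route buys is a one-shot rigidity computation with no propagation step and no sum-versus-difference bookkeeping; what the paper's buys is a coordinate-free argument whose propagation lemma is stated so it applies verbatim at every vertex. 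Two small points deserve an explicit sentence in a polished write-up, though both are already implicit in your text: since the cosines are constant and initially different from $\pm 1$, the angular differences never reach $\{0,\pi\}$ at \emph{later} times either, which is what lets the hypothesis, imposed only at the initial position, control the entire flex; and $C_1\neq C_2$ (needed to erect the axis) holds because coincidence would force the dihedral angle at $A_1B_1$ to equal $0$.
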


\begin{proof}
Assume the converse, i.e., assume that one of the dihedral angles remains constant.
Without loss of generality, we can assume that this is the angle at the edge $A_1B_1$
(recall that we use the same notation for the vertices as in Fig.~\ref{f1}).
Then the length of the diagonal $C_1C_2$ is constant during the flex.
Hence the lengths of all edges of the tetrahedra $A_1B_1C_1C_2$ and $A_1B_2C_1C_2$ 
remain constant.
Thus all dihedral angles of these tetrahedra remain constant.
Therefore, the dihedral angle at the edge $A_1B_2$ of the Bricard octahedron is 
constant (indeed, it is equal to the dihedral angle of $A_1B_2C_1C_2$ at~$A_1B_2$).

Observe that the triangle $A_1C_1C_2$ is non-degenerate in the sense that its 
vertices do not lie on the same line (otherwise the dihedral angle at the edge 
$A_1B_1$ of the Bricard octahedron would be equal to 0 or $\pi$, which contradicts 
the conditions of Lemma~\ref{lemma1}).
Consequently, the tetrahedra $A_1B_1C_1C_2$ and $A_1B_2C_1C_2$ are adjacent to 
each other along the nondegenerate face $A_1C_1C_2$.
This means that the dihedral angles at the edges $A_1C_1$ and $A_1C_2$ of the 
Bricard octahedron are either the sum or the difference of the dihedral angles at 
the same edges in the tetrahedra $A_1B_1C_1C_2$ and $A_1B_2C_1C_2$, and therefore 
are also constant during the flex.

Thus, we have proven that if the dihedral angle at the edge $A_1B_1$ of the 
Bricard octahedron is constant during the flex, then the dihedral angles at its 
edges $A_1B_2$, $A_1C_1$ and $A_1C_2$ are also constant.
In other words, we have proven that if, under the conditions of Lemma~\ref{lemma1}, 
a vertex is incident to an edge, the dihedral angle at which remains constant, 
then the dihedral angles of all edges incident to this vertex remain constant.
It immediately follows that the dihedral angles remain constant for all edges of the
Bricard octahedron, and therefore the lengths of all its diagonals remain constant.
The latter, however, contradicts our definition of the flex.
This contradiction proves Lemma~\ref{lemma1}.
\end{proof}

\section{Steffen polyhedron $\mathscr{S}$}\label{sec3} 
 
As is known, the flexible Steffen polyhedron, denote it by $\mathscr{S}$, is 
obtained by gluing together a certain tetrahedron, denote it by $\mathscr{T}$, 
and two copies of the same Bricard octahedron of type 1, denote it by $ \mathscr{B}$.
Gluing is carried out along congruent faces.
In \S~3 we resemble this well-known construction. 
It can also be found, for example, in \cite{Al10} and the literature mentioned there.
Additional insight into the flex of $\mathscr{S}$ can be obtained via the 
computer animation \cite{Mc07}.

Let the tetrahedron $\mathscr{T}=T_1T_2T_3T_4$ (see the left side of Fig.~\ref{f3}) 
have the following edge lengths: $|T_1T_4|=17$, 
$|T_1T_2|=|T_1T_3|=|T_2T_4| =|T_3T_4|=12$, and $|T_2T_3|=11$.
$\mathscr{T}$~will not change its spatial shape during the flex of~$\mathscr{S}$.
Therefore, throughout \S\S~\ref{sec3}--\ref{sec7} we assume that the points 
$T_j$ ($j=1,\dots, 4$) occupy a fixed position in space.
\begin{figure}[h]
\begin{center}
\includegraphics[width=0.75\textwidth]{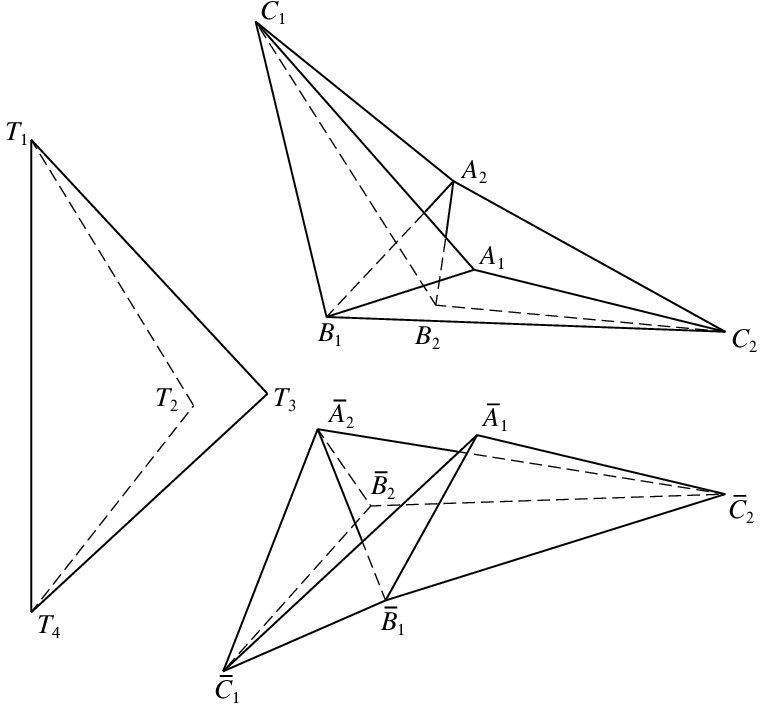}
\end{center}
\caption{Gluing the Steffen polyhedron $\mathscr{S}$.}\label{f3}
\end{figure}

Let $\mathscr{B}=A_1A_2B_1B_2C_1C_2$ be a Bricard octahedron of type 1 
(see the upper right part of Fig.~\ref{f3}), whose edges have the following lengths:
$|A_1C_1|=|B_1C_2|=|A_2C_2|=|B_2C_1|=12$, 
$|A_2C_1|=|B_1C_1|=|A_1C_2|=|B_2C_2|=10$, 
$|A_1B_1|=|A_2B_2|=5$, and $|A_1B_2|=|A_2B_1|=11$.

Let us move $\mathscr{B}$ in $\mathbb{R}^3$ by means of an orientation-preserving 
motion so that the following pairs of points coincide: $T_1$ and $C_1$, 
$T_2$ and $B_2$, and $ T_3$ and $A_1$.
Note that here we do not care whether $\mathscr{T}$ intersects $\mathscr{B}$, after 
the above mentioned motion, somewhere outside the faces $T_1T_2T_3$ and $A_1B_2C_1$. 
In this case, we say that we have glued $\mathscr{T}$ and $\mathscr{B}$ along 
$T_1T_2T_3$ and $A_1B_2C_1$. 
Since we want the result of gluing two polyhedra to be a polyhedron again, here 
and below we mean that both faces along which the gluing was made are deleted.
We denote the result of gluing $\mathscr{T}$ and $\mathscr{B}$ 
by $\mathscr{T}\sqcup\mathscr{B}$.

Now consider another copy
$\overline{\mathscr{B}}=\overline{A}_1\overline{A}_2
\overline{B}_1\overline{B}_2\overline{C}_1\overline{C}_2$
of the Bricard octahedron of type 1 which is obtained from $\mathscr{B}$ by an
orientation-preserving motion of~$\mathbb{R}^3$.
Thus, each edge of $\overline{\mathscr{B}}$ has the same length as the 
corresponding edge of $\mathscr{B}$.
For example, $|\overline{A}_1\overline{C}_1|=|A_1C_1|=12$.
 
Glue $\mathscr{T}\sqcup\mathscr{B}$ and $\overline{\mathscr{B}}$ along the faces
$T_2T_3T_4$ and $\overline{A}_1\overline{B}_2\overline{C }_1$, i.e., move 
$\overline{\mathscr{B}}$ in $\mathbb{R}^3$ by means of an orientation-preserving motion
so that the following pairs of vertices coincide: $T_2$ and $\overline{B}_2$, 
$T_3 $ and $\overline{A}_1$, as well as $T_4$ and $\overline{C}_1$ 
(see the lower right part of Fig.~\ref{f3}).
The polyhedron obtained as a result of such gluing is denoted by 
$(\mathscr{T}\sqcup\mathscr{B})\sqcup\overline{\mathscr{B}}$. 
Obviously, it is homeomorphic to the disk.

Since $\mathscr{B}$ and $\overline{\mathscr{B}}$ are Bricard octahedra, 
the positions of the vertices $C_2$ and $\overline{C}_2$ of 
$(\mathscr{T}\sqcup \mathscr{B})\sqcup\overline{\mathscr{B}}$ are not determined 
uniquely by the above gluing $\mathscr{T}$ and $\mathscr{B}$ and the subsequent 
gluing $\mathscr{T} \sqcup\mathscr{B}$ and $\overline{\mathscr{B}}$.
In fact, after such gluing, $C_2$ and $\overline{C}_2$ can lie at any points 
of the circle $\gamma$ defined by the following conditions: $\gamma$ is located 
in a plane perpendicular to the segment $T_2T_3$; the center of $\gamma$ is
the midpoint of $T_2T_3$; the radius of $\gamma$ is equal to 
$\sqrt{|A_1C_2|^2-|A_1B_2|^2/4}= \sqrt{10^2-11^2/4}=(3/2)\sqrt{31} \approx 8.35$.
Thus, by choosing some position of $C_2$ on $\gamma$, we can (without changing 
the position of $\mathscr{T}$ in space) bend the Bricard octahedron 
$\overline{\mathscr{B}}$ so that $\overline{C}_2$ coincides with $C_2$.
In this position we glue the polyhedron
$(\mathscr{T}\sqcup\mathscr{B})\sqcup\overline{\mathscr{B}}$
with itself along the faces $A_1B_2C_2$ and 
$\overline{A}_1\overline{B}_2\overline{C}_2$.
Denote the resulting polyhedron by
$((\mathscr{T}\sqcup\mathscr{B})\sqcup\overline{\mathscr{B}})_{\overline{C}_2=C_2}$.
Observe that, in order 
$((\mathscr{T}\sqcup\mathscr{B})\sqcup\overline{\mathscr{B}})_{ \overline{C}_2=C_2}$ 
is a polyhedron, in the course of gluing we remove not only the internal points of 
the glued faces $A_1B_2C_2$ and $\overline{A}_1\overline{B}_2\overline{C}_2$ 
(this was already explained above), but also the internal points of the segment 
$A_1B_2=\overline{A}_1\overline{B}_2$ (which, therefore, is not an edge of 
$((\mathscr{T}\sqcup\mathscr{B})\sqcup\overline{\mathscr{B}})_{\overline{C}_2=C_2})$).
 
It follows directly from the construction that 
$((\mathscr{T}\sqcup\mathscr{B})\sqcup \overline{\mathscr{B}})_{\overline{C}_2=C_2}$
form a continuous family of combinatorially equivalent polyhedra with congruent
corresponding faces, and that the polyhedra corresponding to different positions 
of the point $\overline{C}_2=C_2$ on $\gamma$ are not congruent to each other. 
Thus, this family is a flex of any of the polyhedra included in it, or, 
equivalently, any polyhedron in this family is flexible.
Let us select one polyhedron in this family as follows.
 
Let us denote by $X$ the midpoint of the segment $T_1T_4$, by $Y$ the midpoint of 
the segment $T_2T_3$, and by $Z$ the intersection point of the ray 
$\stackrel{\longrightarrow}{XY}$ with the circle $\gamma$.
If the point $\overline{C}_2=C_2$ coincides with $Z$, then we call 
$((\mathscr{T}\sqcup\mathscr{B})\sqcup \overline{\mathscr{B}})_{\overline{C}_2=C_2}$  
the \textit{Steffen polyhedron} and denote it by $\mathscr{S}$.
We consider it a generally known fact that $\mathscr{S}$ has no self-intersections.
Note, however, that the technology we develope in \S~\ref{sec5} for checking the 
absence of self-intersections in a polyhedron allows us to strictly prove that 
$\mathscr{S}$, as well as any polyhedron 
$((\mathscr{ T}\sqcup\mathscr{B})\sqcup \overline{\mathscr{B}})_{\overline{C}_2=C_2}$,
sufficiently close to $\mathscr{S}$, indeed has no self-intersections. 
Although this will not be needed in this article, we can even specify that 
$((\mathscr{T}\sqcup\mathscr{B})\sqcup\overline{\mathscr{B}})_{\overline{C}_2=C_2}$ 
has no self-intersections provided the angle between the segments $XZ$ and $XC_2$ 
is less than $7.5^{\circ}$.

Using the following agreements, we assign a permanent designation to each vertex $V$
of $\mathscr{S}$ and always use it in \S\S~\ref{sec5}--\ref{sec7}:

$\bullet$ if, before gluing, $V$ belonged to only one of the polyhedra 
$\mathscr{T}$, $\mathscr{B}$ or $\overline{\mathscr{B}}$, then we reserve for it 
the designation that it had on that polyhedron;

$\bullet$ if $V$ is the result of gluing a vertex $W$ of $\mathscr{T}$ with 
a vertex of $\mathscr{B}$ and/or $\overline{\mathscr{B}}$, then we assign the 
vertex $V\in\mathscr{S}$ the designation that $W$ had on $\mathscr{T}$;

$\bullet$ if $V$ is the result of gluing a vertex $W$ of $\mathscr{B}$ with 
a vertex of $\overline{\mathscr{B}}$, then we assign the vertex $V\in \mathscr{S}$ 
the designation that $W$ had on $\mathscr{B}$.

For example, in the process of construction of $\mathscr{S}$, we 
first glued the vertices $T_3\in\mathscr{T}$ and $A_1\in \mathscr{B}$, and 
then glued the resulting point to the vertex 
$\overline{ A}_1\in \overline{\mathscr{B}}$.
In accordance with what has been said, we denote the resulting vertex of $\mathscr{S}$ 
by $T_3$.
Another example: at the last stage of construction of $\mathscr{S}$, we glued together 
the vertices $C_2\in \mathscr{B}$ and $\overline{C}_2\in \overline{\mathscr{B}}$.
Hence, we denote the resulting vertex of $\mathscr{S}$ by $C_2$.

\section{Construction of a modified Steffen polyhedron $\mathscr{M}$}\label{sec4} 

By construction, the edge $T_1T_4$ of the Steffen polyhedron $\mathscr{S}$ has 
length 17.
Direct calculations show that the internal dihedral angle at this edge is equal 
to $\arccos (45/287)\approx 80^{\circ}59'$.
Let us change the length of $T_1T_4$ so that it becomes equal to
$\sqrt{334}\approx 18.28$. 
We do not change neither the lengths of the remaining edges of $\mathscr{S}$, 
nor the designations of its vertices.
The resulting polyhedron is called a \textit{modified Steffen polyhedron} and 
is denoted by $\mathscr{M}$.
Direct calculation shows that the dihedral angle of $\mathscr{M}$ at the edge 
$T_1T_4$ is equal to $90^{\circ}$.
It is clear directly from the construction of~$\mathscr{M}$ that it is 
combinatorially equivalent to a sphere, has only triangular faces, and is flexible.
The development of~$\mathscr{M}$ is shown in Fig.~\ref{f4}.
The reader can scan it, print it on a larger scale on thick paper and glue 
together a model of~$\mathscr{M}$.
This will simplify the understanding of our further constructions.
We especially emphasize that by construction the vertex $C_2\in \mathscr{M}$ 
coincides in $\mathbb{R}^3$ with the point $Z$ constructed in \S~\ref{sec3}.
\begin{figure}[h]
\begin{center}
\includegraphics[width=0.85\textwidth]{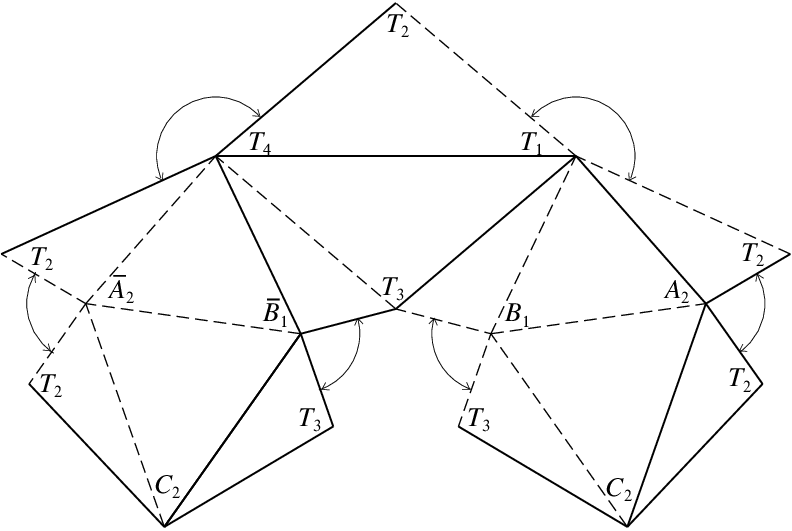}
\end{center}
\caption{The development of a modified Steffen polyhedron~$\mathscr{M}$.
It must be bent so that the internal dihedral angle of 
$\mathscr{M}$ is less than $180^{\circ}$ at the edges drawn with solid lines 
and is greater than $180^{\circ}$ at the edges drawn with dotted lines.
The sides of the development with identical vertices need to be glued in pairs; 
more precisely, one must glue in pairs the sides connected to each other by 
circular arcs, two sides $T_2C_2$, and two sides $T_3C_2$.
}\label{f4}
\end{figure}

We associate with $\mathscr{M}$ the following Cartesian coordinate system 
in $\mathbb{R}^3$.
The origin is at the point $X$, which was defined in \S~\ref{sec3} as the midpoint of
the segment $T_1T_4$.
We choose the $x$-axis so that it passes through the point $T_3$ and~$T_3$ has 
a positive $x$-coordinate.
We choose the $y$-axis so that it passes through the point $T_2$ and~$T_2$ has 
a positive $y$-coordinate.
And we choose the $z$-axis so that it passes through the point $T_1$ and~$T_1$ has 
a positive $z$-coordinate.
We use this and only this coordinate system in \S\S~\ref{sec4}--\ref{sec7}.

Vertex $A_2$ is connected by edges of~$\mathscr{M}$ with its three vertices 
$T_1$, $T_2$ and $C_2$, whose coordinates are known to us directly from the 
constructions of $\mathscr{M}$ and of the coordinate system. 
They are shown in Table~\ref{tab1}.
Using \textit{Mathematica} we solve symbolically the system
\begin{equation}
\begin{cases}
|A_2T_1|^2 & =10^2,\\
|A_2T_2|^2 & =5^2,\\
|A_2C_2|^2 & =12^2
\end{cases}\label{eq1}
\end{equation}
of three algebraic equations of the second degree with respect to the coordinates 
of~$A_2$, and we obtain two exact (i.e., expressed in radicals) solutions.
It is obvious that the two points in $\mathbb{R}^3$ corresponding to these 
solutions are symmetrical to each other with respect to the plane passing through 
the vertices $T_1$, $T_2$ and $C_2$.
Considering the model of $\mathscr{M}$, we conclude that vertex $A_2$ corresponds 
to the solution whose $z$-coordinate is the largest (since only in this case the 
inner dihedral angle of~$\mathscr{M}$ at edge $A_2T_2$ is less than $180^{\circ}$;
the latter agrees with the development shown in Fig.~\ref{f4}).
We put the exact values of the coordinates of vertex $A_2$ found in this way 
on Table~\ref{tab1}.
The approximate values of the coordinates of vertices shown in Table~\ref{tab1}
are not used in our reasoning and are presented solely to make it easier for the 
reader to imagine the spatial form of~$\mathscr{M}$.
For example, the fact that the approximate value of the $x$-coordinate of point $A_2$
is negative suggests that $A_2$ (and therefore ~$\mathscr{M}$) is not contained
in the quarter of the space consisting of the points with positive $x$- and 
$y$-coordinates.
\begin{table}[h]
\begin{tabular}{llll}
\hline
   &\hphantom{A}{\small$x$-coordinate} 
   &\hphantom{A}{\small$y$-coordinate} 
   &\hphantom{A}{\small$z$-coordinate}\\
\hline
$T_1$   & {\small\hphantom{$-$}$0$} & {\small\hphantom{$-$}$0$} 
        & {\small\hphantom{$-$}$\frac{\sqrt{167}}{\sqrt{2}}\approx 9.13 
        $} \\
$T_2$   & {\small\hphantom{$-$}$0$} 
        & {\small\hphantom{$-$}$\frac{11}{\sqrt{2}}\approx 7.77 
        $} 
        & {\small\hphantom{$-$}$0$} \\
$T_3$   & {\small\hphantom{$-$}$\frac{11}{\sqrt{2}}\approx 7.77 
        $} 
        & {\small\hphantom{$-$}$0$} & {\small\hphantom{$-$}$0$} \\
$T_4$   & {\small\hphantom{$-$}$0$} & {\small\hphantom{$-$}$0$} 
        & {\small$-\frac{\sqrt{167}}{\sqrt{2}}\approx -9.13 
        $} \\ 
$C_2$   & {\small\hphantom{$-$}$\frac{11+3\sqrt{31}}{2\sqrt{2}}\approx 9.79 
        $} 
        & {\small\hphantom{$-$}$\frac{11+3\sqrt{31}}{2\sqrt{2}}\approx 9.79 
        $} 
        & {\small\hphantom{$-$}$0$} \\
$A_2$   & {\small\hphantom{$-$}$\frac{\omega_1\sqrt{2}-
        2(200-33\sqrt{31})\sqrt{\rho}}{1230304047998}\approx -1.19$} 
        & {\small\hphantom{$-$}$\frac{\omega_2\sqrt{2}+2\sqrt{\rho}}{15573468962}
        \approx 8.89 
        $} 
        & {\small\hphantom{$-$}$\frac{167\omega_3\sqrt{2}+
        22\sqrt{\rho}}{15573468962\sqrt{167}}\approx 4.72 
        $} \\
$B_1$   & {\small\hphantom{$-$}$\frac{\omega_2\sqrt{2}-
        2\sqrt{\rho}}{15573468962}\approx 2.79 
        $} 
& {\small\hphantom{$-$}$\frac{\omega_1\sqrt{2}+
        2(200-33\sqrt{31})\sqrt{\rho}}{1230304047998}\approx 0.05
        $}
& {\small\hphantom{$-$}$\frac{167\omega_3\sqrt{2}-
        22\sqrt{\rho}}{15573468962\sqrt{167}}\approx -0.46 
        $} \\
$\overline{A}_2$    & {\small\hphantom{$-$}$\frac{\omega_1\sqrt{2}+
        2(200-33\sqrt{31})\sqrt{\rho}}{1230304047998}\approx 0.05
        $} 
        & {\small\hphantom{$-$}$\frac{\omega_2\sqrt{2}-
        2\sqrt{\rho}}{15573468962}\approx 2.79 
        $}  
& {\small$-\frac{167\omega_3\sqrt{2}-
        22\sqrt{\rho}}{15573468962\sqrt{167}}\approx 0.46 
        $} \\
$\overline{B}_1$    & {\small\hphantom{$-$}$\frac{\omega_2\sqrt{2}+
        2\sqrt{\rho}}{15573468962}\approx 8.89 
        $} 
        & {\small\hphantom{$-$}$\frac{\omega_1\sqrt{2}-
        2(200-33\sqrt{31})\sqrt{\rho}}{1230304047998}\approx -1.19
        $} 
        & {\small$-\frac{167\omega_3\sqrt{2}+
        22\sqrt{\rho}}{15573468962\sqrt{167}}\approx -4.72 
        $} \\
\hline
\end{tabular}
\caption{Exact and approximate values of the coordinates of the vertices 
of~$\mathscr{M}$.
All decimal places in approximate values are correct, i.e., are written without 
taking into account rounding rules. 
To shorten expressions for exact values, the following notation is used:
$\rho=167(1712315512948039256+297671463726717927\sqrt{31})$,
$\omega_1=237(670333576-497644539\sqrt{31})$,
$\omega_2=3(26431711823-892912093\sqrt{31})$, 
$\omega_3=2798420941-176443707\sqrt{31}$.}\label{tab1}
\end{table}
 
Similarly, we find the exact coordinates of the vertex $B_1$.
To do this, we use the fact that $B_1$ is connected by edges of $\mathscr{M}$ 
with its three vertices $T_1$, $T_3$ and $C_2$. 
But this time, from two solutions to the corresponding system of three algebraic
equations of the second degree, similar to system (\ref{eq1}), we choose the one 
with the smallest $z$-coordinate.
We put the exact values of the coordinates of $B_1$ found in this way 
on Table~\ref{tab1}.

We check the correctness of the above described method for recognizing the 
coordinates of $A_2$ among solutions of system~(\ref{eq1}) and the coordinates 
of $B_1$ among solutions of a similar system by calculating the length of~
$A_2B_1$ through the coordinates of its ends given in Table~\ref {tab1}.
Symbolic calculations in \textit{Mathematica} show that this length is indeed 
equal to 11. 

The coordinates of the vertices $\overline{A}_2$ and $\overline{B}_1$ can be 
found in the same way as we found the coordinates of the vertices $A_2$ and $B_1$.
However, these calculations can be avoided using the following lemma.

\begin{lemma}\label{lemma2}
The modified Steffen polyhedron $\mathscr{M}$ transforms into itself under the action of
rotation of the entire space $\mathbb{R}^3$ by $180^{\circ}$ around the line $XY$ 
passing through the points $X$ and $Y$, constructed in \S~\ref{sec3}, i.e. under the
rotation of $\mathbb{R}^3$ defined by the matrix
\begin{equation*}
L=\begin{pmatrix} 
0 & 1 & 0 \\ 
1 & 0 & 0 \\
0 & 0 & -1  
\end{pmatrix}.
\end{equation*} 
\end{lemma}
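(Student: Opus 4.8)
The plan is to exhibit $L$ as a Euclidean isometry that permutes the faces of $\mathscr{M}$; since $\mathscr{M}$ is the union of its finitely many closed triangular faces and $L$ is a bijection of $\mathbb{R}^3$, it will follow that $L(\mathscr{M})=\mathscr{M}$. First I would confirm that $L$ really is the half-turn about the line $XY$. The matrix $L$ is symmetric and orthogonal with $\det L=1$ and $L^2=I$, and it fixes the direction $(1,1,0)$; since $X$ is the origin and $Y=\tfrac12(T_2+T_3)=\bigl(\tfrac{11}{2\sqrt2},\tfrac{11}{2\sqrt2},0\bigr)$, the line $XY$ points along $(1,1,0)$, so $L$ is exactly the rotation by $180^{\circ}$ about $XY$.

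Next I would dispose of the vertices whose coordinates are already known. From the first five rows of Table~\ref{tab1} one checks directly that $L(x,y,z)=(y,x,-z)$ sends $T_1\leftrightarrow T_4$ and $T_2\leftrightarrow T_3$; this is consistent with the symmetric edge lengths of the modified tetrahedron ($|T_1T_2|=|T_3T_4|$, $|T_1T_3|=|T_2T_4|$, and $|T_1T_4|$, $|T_2T_3|$ invariant under the swap), so $L$ maps the rigid tetrahedron $\mathscr{T}$ onto itself and permutes its two surviving faces $T_1T_2T_4$ and $T_1T_3T_4$. Moreover $C_2=Z$ lies on the axis $XY$ (its coordinates satisfy $x=y$, $z=0$), hence $L(C_2)=C_2$.

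The heart of the matter is to show that $L$ interchanges the two glued octahedra, namely $L(A_2)=\overline{B}_1$ and $L(B_1)=\overline{A}_2$. Here I would argue from the construction rather than from coordinates, which is exactly what lets us avoid computing $\overline{A}_2$ and $\overline{B}_1$. Because $L$ is an orientation-preserving isometry fixing $\mathscr{T}$, the image $L(\mathscr{B})$ is again a Bricard octahedron of type~1, congruent to $\mathscr{B}$, glued to $\mathscr{T}$ along $L(A_1B_2C_1)=L(T_3T_2T_1)=T_2T_3T_4$ and with apex $L(C_2)=C_2=Z$ — precisely the data defining $\overline{\mathscr{B}}$. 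Since $L$ is an isometry, its two remaining free vertices satisfy the very distance equations that determine $\overline{A}_2$ and $\overline{B}_1$: using $L(T_1)=T_4$, $L(T_2)=T_3$, $L(C_2)=C_2$ and the edge lengths of $\mathscr{B}$ one gets $|L(A_2)T_3|=|A_2T_2|=5$, $|L(A_2)T_4|=|A_2T_1|=10$, $|L(A_2)C_2|=|A_2C_2|=12$, which is exactly the system for $\overline{B}_1$, and symmetrically for $L(B_1)$ and $\overline{A}_2$.

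Each such system of three distance equations has two solutions, symmetric in the plane through the three prescribed neighbours, so the last — and main — point is to check that $L(A_2)$ and $L(B_1)$ lie on the correct sides, i.e. really coincide with $\overline{B}_1$ and $\overline{A}_2$ rather than with their mirror images. I expect this to be the only delicate step. I would settle it by orientation: $L$ preserves orientation and carries the outward-normal and dihedral-angle data of $\mathscr{B}$ to that of $\overline{\mathscr{B}}$, while the development in Fig.~\ref{f4} is invariant under the induced relabelling, which forces the side choices to agree. With $L(A_2)=\overline{B}_1$ and $L(B_1)=\overline{A}_2$ in hand, $L$ permutes all nine vertices, hence all fourteen faces (the two faces of $\mathscr{T}$ among themselves, and the six faces of $\mathscr{B}$ onto the six faces of $\overline{\mathscr{B}}$ and back), and therefore $L(\mathscr{M})=\mathscr{M}$, proving the lemma; as a by-product, $\overline{A}_2=L(B_1)$ and $\overline{B}_1=L(A_2)$ furnish the last two rows of Table~\ref{tab1}.
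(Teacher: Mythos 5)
Your proposal is correct and follows essentially the same route as the paper's proof: check via the coordinates that $L$ swaps $T_1\leftrightarrow T_4$ and $T_2\leftrightarrow T_3$ and fixes $C_2$, identify $L(A_2)$ with $\overline{B}_1$ (and $L(B_1)$ with $\overline{A}_2$) through the same three distance equations, and resolve the two-solution ambiguity by the side/dihedral-angle data of the development in Fig.~\ref{f4}. The paper makes your last, admittedly delicate, step explicit in exactly the way you sketch: it observes that the inner dihedral angles at the edges $A_2T_2=A_2B_2$ and $\overline{B}_1T_3=\overline{B}_1\overline{A}_1$ are both less than $180^{\circ}$, so $L(A_2)$ and $\overline{B}_1$ lie on the same side of the plane through $T_3$, $T_4$, $C_2$.
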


\begin{proof}
Using Table~\ref{tab1}, we directly verify that
$L(T_1)=T_4$, $L(T_2)=T_3$, $L(T_3)=T_2$, $L(T_4)=T_1$, and $L(C_2)=C_2$.
Therefore, the distances from the point $L(A_2)$ to the vertices $T_3$, $T_4$, 
and $C_2$ are the same as the distances from the point $\overline{B}_1$ to 
the specified vertices.
Indeed,
$|L(A_2)T_3|=|L(A_2)L(T_2)|=|A_2T_2|=|A_2B_2|=5$ and
$|\overline{B}_1T_3|=|\overline{B}_1\overline{A}_1|=5$;
$|L(A_2)T_4|=|L(A_2)L(T_1)|=|A_2T_1|=|A_2C_1|=10$ and 
$|\overline{B}_1T_4|=|\overline{B}_1\overline{C}_1|=10$;
$|L(A_2)C_2|=|L(A_2)L(C_2)|=|A_2C_2|=12$ and 
$|\overline{B}_1C_2|=12$.
In addition, both points $L(A_2)$ and $\overline{B}_1$ lie on the same side 
of the plane passing through the vertices $T_3$, $T_4$, and $C_2$.
Indeed, according to Fig.~\ref{f4}, the inner dihedral angle of~$\mathscr{M}$ at 
both edges $\overline{B}_1T_3=\overline{B}_1\overline{A}_1$ and $A_2T_2=A_2B_2$
(and hence the edge $L(A_2)L(T_2)=L(A_2)L(B_2 )$) is less than $180^{\circ}$.
Hence, $L(A_2)=\overline{B}_1$.

Reasoning in a similar way, we can prove that the distances from $L(B_1)$ to 
the vertices $T_2=L(T_3)$, $T_4=L(T_1)$, $C_2=L(C_2)$ are the same as distances 
from $\overline{A}_2$ to the specified vertices.
In addition, by analogy with what was said above, it can be proven that both 
points $L(B_1)$ and $\overline{A}_2$ lie on the same side of the plane passing 
through $T_2$, $T_4$, and $C_2 $.
Hence, $L(B_1)=\overline{A}_2$. 

Thus, we are convinced that the set of vertices of $\mathscr{M}$ (as well as 
the sets of its edges and faces) is mapped onto itself by~$L$.
This means that $\mathscr{M}$ is mapped onto itself by~$L$.
\end{proof}

The proof of Lemma~\ref{lemma2} yields $\overline{B}_1=L(A_2)$ and 
$\overline{A}_2=L(B_1)$. 
Using these relations, we can find coordinates of $\overline{A}_2$ and 
$\overline{B}_1$ without additional calculations and thereby complete 
filling out Table~\ref{tab1}.

So, $\mathscr{M}$ has 9 vertices, 21 edges and 14 faces, and has the 
symmetry described in Lemma~\ref{lemma2}.
The study of flexible and nonrigid frameworks (and therefore polyhedra) with 
symmetry is in itself an interesting and nontrivial problem, see, e.g., 
\cite{CNSW20} and literature cited there.
But we will achieve our goals in the most direct and elementary way, without 
using general results on the effect of symmetry on the rigidity and 
flexibility of frameworks.

\section{Do the polyhedra $\mathscr{M}$ and $\mathscr{S}$ have self-intersections?}\label{sec5} 

To solve Problem~\ref{probl1} we must be able to answer to decide whether 
a given polyhedron has self-intersections.
To simplify and unify our reasoning, in \S~\ref{sec5} we describe an algorithm 
that makes it possible to guarantee the absence of self-intersections for the 
polyhedra $\mathscr{S}$, $\mathscr{M}$ and $\mathscr{P }$ (the latter will be 
built in \S~\ref{sec6}) and, of course, not only for them.

The problem of finding intersections and self-intersections of polyhedral surfaces 
arises in a variety of problems in mathematics and applied mathematics, computer 
graphics and video games.
There are many variations of this problem and dozens of algorithms designed to 
solve them, see, for example, \cite{GJK88}, \cite{GS99}, \cite{Er05}, \cite{JSF10},
\cite{Mo17} and the literature mentioned there.
However, existing algorithms are not suitable for us for the following reasons:

$\bullet$ 
we want to solve the problem of finding self-intersections without using
of floating point arithmetic, i.e., we allow ourselves to use only symbolic 
calculations (which we de facto execute in \textit{Mathematica});

$\bullet$ 
we want the algorithm to be logically as transparent as possible, and therefore 
we do not analyze ``exceptional cases''; instead, the algorithm must notify us of the 
occurrence of each such case (i.e., we are not interested in either complete 
automation of calculations or optimization of their complexity and speed) .

Therefore, we are forced to develop our own algorithm, the description of 
which we proceed to.
 
Recall that, according to~\S\ref{sec2}, a polyhedron is the image $f(M)$ of an 
abstract two-dimensional manifold $M$, glued from a finite number of Euclidean 
triangles $\Delta_k$, $k=1,\dots, n$, under the action of a continuous map 
$f: M\to\mathbb{R}^3$ whose restriction to each triangle $\Delta_k$ is a 
linear isometric embedding.
The point ${\bf x}\in f(M)$ is a self-intersection point of~$f(M)$ if its complete
inverse image $f^{-1}({\bf x})\subset M$ consists from more than one point.

Our algorithm is based on the following considerations.
 
Let ${\bf x}\in f(M)$ be a self-intersection point of $f(M)$ and 
let $u, v\in M$ be two distinct points in $f^{-1}({\ bf x})$.
Denote by $\Delta_{k_i}$, $i=1,2$, two different Euclidean triangles 
from which $M$ is glued, such that $u\in \Delta_{k_1}$ and $v\in \Delta_ {k_2}$.
Recall that in \S~\ref{sec2} we agreed to call the triangles $f(\Delta_{k_1})$ 
and $f(\Delta_{k_2})$ faces of~$f(M)$.
Fig.~\ref{f5} illustraits why, for any mutual arrangement of~$f(\Delta_{k_1})$ and
$f(\Delta_{k_2})$, at least on one of these triangles has a point $\widetilde {\bf x}$
of intersection of a face and an edge of~$f(M)$.
Thus, the problem of whether a polyhedron $f(M)$ has self-intersections is 
reduced to the problem of whether a closed triangle and a closed segment 
in $\mathbb{R}^3$ intersect.
\begin{figure}[h]
\begin{center}
\includegraphics[width=0.75\textwidth]{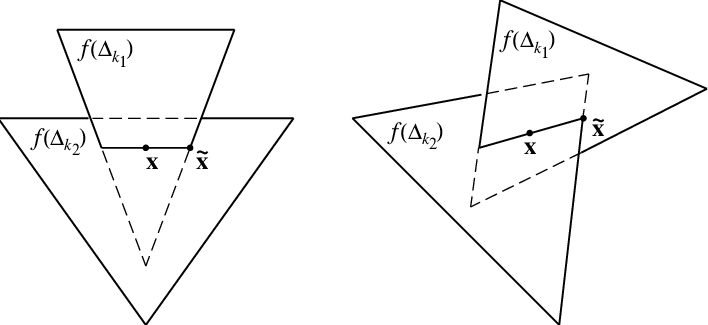}
\end{center}
\caption{Self-intersection point ${\bf x}$ of a polyhedron~$f(M)$ belonging
to faces $f(\Delta_{k_i})\subset f(M)$, $i=1,2$.
The point $\widetilde{\bf x}$ is a point of intersection of a face and an edge 
of~$f(M)$.}\label{f5}
\end{figure}

To solve the last problem, we use the oriented volume of a tetrahedron.
As is known, if ${\bf x}_{k}=(x_{k,1},x_{k,2}, x_{k,3})$, $k=0,1,2,3$, 
are position vectors of the vertices of a tetrahedron in $\mathbb{R}^3$, then 
the oriented volume 
$\operatorname{Vol}({\bf x}_{0}, {\bf x}_{1}, {\bf x}_{2}, {\bf x}_{3})$ 
of this tetrahedron can be calculated using one of the following formulas:
\begin{equation}
\operatorname{Vol}({\bf x}_{0}, {\bf x}_{1}, {\bf x}_{2}, {\bf x}_{3})
={\frac{1}{6}}{\begin{vmatrix}1&x_{0,1}&x_{0,2}&x_{0,3}\\
1&x_{1,1}&x_{1,2}&x_{1,3}\\1&x_{2,1}&x_{2,2}&x_{2,3}\\
1&x_{3,1}&x_{3,2}&x_{3,3}\end{vmatrix}}
={\frac{1}{6}}{\begin{vmatrix}x_{1,1}-x_{0,1}&x_{1,2}-x_{0,2}&x_{1,3}-x_{0,3}\\
x_{2,1}-x_{0,1}&x_{2,2}-x_{0,2}&x_{2,3}-x_{0,3}\\
x_{3,1}-x_{0,1}&x_{3,2}-x_{0,2}&x_{3,3}-x_{0,3}
\end{vmatrix}}.\label{eq2}
\end{equation}	
Recall that the oriented volume of a tetrahedron is equal to its ``usual'' volume 
if the triple of vectors ${\bf x}_{1}-{\bf x}_{0}$, ${\bf x}_{2} -{\bf x}_{0}$, 
${\bf x}_{3}-{\bf x}_{0}$ is right-oriented; is equal to the number opposite to the ``usual'' volume, if the triple of vectors ${\bf x}_{1}-{\bf x}_{0}$, 
${\bf x}_{2}-{\bf x}_{0}$, ${\bf x}_{3}-{\bf x}_{0}$ is left-oriented, and is equal 
to zero if the vectors ${\bf x}_{1}-{ \bf x}_{0}$, ${\bf x}_{2}-{\bf x}_{0}$ and 
${\bf x}_{3}-{\bf x}_{ 0}$ lie in the same plane.

So, let a closed triangle $\Delta\subset\mathbb{R}^3$ be defined by the coordinates 
of its vertices ${\bf y}_{k}=(y_{k,1}, y_{k,2}, y_{ k,3})$, $k=1,2,3$ and a closed
segment $I\subset\mathbb{R}^3$ is given by the coordinates of its vertices
${\bf z}_{j}=(z_{j,1}, z_{j,2}, z_{j,3})$, $j=1,2$.
If both determinants
\begin{equation}\label{eq3}
6\operatorname{Vol}({\bf y}_1, {\bf y}_2, {\bf y}_3, {\bf z}_1), \qquad
6\operatorname{Vol}({\bf y}_1, {\bf y}_2, {\bf y}_3, {\bf z}_2)
\end{equation}
are nonzero and have the same sign, then~${\bf z}_1$ and ${\bf z}_2$ lie srictly 
to one side of the plane containing~$\Delta$ (see the left side of Fig.~\ref{f6}).
\begin{figure}[h]
\begin{center}
\includegraphics[width=0.9\textwidth]{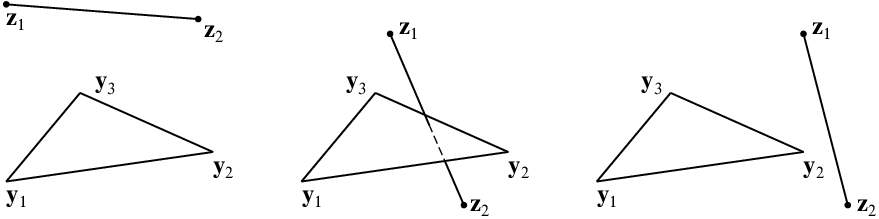}
\end{center}
\caption{Various cases of mutual arrangement of a triangle $\Delta$ with vertices 
${\bf y}_1, {\bf y}_2, {\bf y}_3$ and a segment $I$ with endpoints ${\bf z}_1$, 
$ {\bf z}_2$.}\label{f6}
\end{figure}
Consequently, in this case~$I$ does not intersect the plane containing~$\Delta$, 
and hence $I\cap\Delta=\varnothing$.
If both determinants (\ref{eq3}) are nonzero and have different signs, 
then~${\bf z}_1$ and ${\bf z}_2$ lie to opposite sides of the plane 
containing~$\Delta$ (see the central and right parts of Fig.~\ref{f6}).
In this case, to figure out whether $I$ and $\Delta$ intersect, we need 
to calculate three more determinants:
\begin{equation}\label{eq4}
6\operatorname{Vol}({\bf z}_1, {\bf z}_2, {\bf y}_1, {\bf y}_2),\qquad
6\operatorname{Vol}({\bf z}_1, {\bf z}_2, {\bf y}_2, {\bf y}_3),\qquad
6\operatorname{Vol}({\bf z}_1, {\bf z}_2, {\bf y}_3, {\bf y}_1).
\end{equation}

If all determinants (\ref{eq4}) are non-zero and have the same sign, then
$I\cap\Delta\neq\varnothing$.
This case is schematically shown in the central part of Fig.~\ref{f6}.
To understand why $I\cap\Delta\neq\varnothing$, for each point ${\bf x}$ lying on 
the boundary of the triangle $\Delta$, we denote by $\pi({\bf x}) $ the half-plane
bounded by the line containing~$I$ and passing through~${\bf x}$.
The fact that all three determinants in (\ref{eq4}) have the same sign, implies that, 
when~${\bf x}$ goes around the boundary of~$\Delta$ once, moving all the time in the 
same direction, the half-plane $\pi({\bf x})$ also always rotates in the same direction
and makes a complete turn around the straight line containing~$I$.
This means that the boundary of~$\Delta$ is linked to the staight line containing~$I$.
Taking into account that the ends of~$I$ lie on opposite sides of the plane 
containing~$\Delta$, we conclude that $I\cap\Delta\neq\varnothing$.

Similarly, if all three determinants in (\ref{eq4}) are nonzero, but not all of them 
have the same sign, then after~${\bf x}$ circles the boundary of~$\Delta$ once, moving
all the time in the same direction, the half-plane $\pi({\bf x})$ will return to its
original position, but will not make a complete turn around the straight line 
containing~$I$.
This means that the boundary of~$\Delta$ and the line containing~$I$ are not 
linked curves.
Therefore, in this case $I\cap\Delta=\varnothing$.

This algorithm does not work in ``exceptional'' cases, when at least one of 
the determinants in (\ref{eq3}) and (\ref{eq4}) is zero.
In principle, it can be improved by introducing new functions, somewhat similar 
to the determinants in (\ref{eq3}) and (\ref{eq4}), the use of which will allow 
us to fully understand each of the ``exceptional'' cases.
But we expect that for the polyhedra $\mathscr{M}$, $\mathscr{S}$ and $\mathscr{P}$,
i.e., for the polyhedra of interest to us, there will be no ``exceptional'' cases 
at all or they will be very few so that we can study each of them individually.
Therefore, in order to avoid complicating the algorithm, when encountering an
``exceptional'' case, our algorithm only states that the question of the existence 
of intersections of~$\Delta$ and~$I$ requires additional study.

More specifically, our algorithm can be formulated as follows:

{\bf Step~1.}
[Preliminary work for compiling lists of vertices, edges and faces.]

Enumerate the vertices of the abstract manifold~$M$ in an arbitrary way.
Denote the $r$th vertex of $M$ by $v_r$.
The set~$S$ of all unordered pairs $s=(v_p, v_q)$ of vertices of~$M$ 
such that $v_p$ and $v_q$ are connected by an edge of~$M$ is called
the \textit{list of edges} of~$M$.
The set~$T$ of all unordered triples $t=(v_i, v_j, v_k)$ of vertices
of~$M$ such that $v_i$, $v_j$ and $v_k$ are vertices of some face of~$M$
is called the \textit{list of faces} of~ $M$. 
Fix a linear order $<$ on the Cartesian product $T\times S$ of~$T$ and $S$. 
Generate a list of all vertices ${\bf x}_r=f(v_r)\in\mathbb{R}^3$ of the 
polyhedron~$f(M)$, in which every ${\bf x}_r$ is given by its coordinates 
in~$\mathbb{R}^3$.
Finally, create an empty auxiliary file.

{\bf Step~2.}  
[This step begins exhaustion of all pairs $(t,s)\in T\times S$ and 
verification whether the intersection $f(t)\cap f(s)$ is empty.
The first time we perform Step 2, we set $(t,s)$ equal to the least 
element of the ordered set $(T\times S, <)$.
When Step 2 is repeated, the selection of $(t,s)$ occurs in Step 5. 
In order to bring the notation closer to those previously used in \S~\ref{sec5}, we
assume that the face $t\subset M$ has vertices $u_1$, $u_2$, $u_3$, and the edge 
$s\subset M$ has vertices $w_1$, $w_2$.
We denote the face $f(t)\subset f(M)$ by $\Delta$, denote its vertices by 
${\bf y}_k=f(u_k)\in\mathbb{R}^3$, $k=1,2,3$, and write these vertices in 
coordinates in the form ${\bf y}_k=(y_{k,1}, y_{k,2}, y_{k,3})\in \mathbb{R}^3$.
Similarly, we denote the edge $f(s) \subset f(M)$ by $I$, denote its ends by 
${\bf z}_j=f(w_j)\in\mathbb{R}^3$, $j =1,2$, and write these ends in coordinates 
in the form ${\bf z}_j=(z_{j,1}, z_{j,2}, z_{j,3})\in\mathbb{R }^3$.]

Using the notation just introduced, we can describe Step 2 as follows:

$\bullet$ 
if $t$ and $s$ are not incident to each other, i.e., if $u_k\neq w_j$ for all 
$k=1,2,3$ and all $j=1,2$, then go to {\bf Step 3};

$\bullet$ 
if $t$ and $s$ have exactly one common point, i.e., if the equality $u_k=w_j$ 
holds for only one pair of indices $k$, $j$, then go to {\bf Step 4};

$\bullet$ 
if $t$ and $s$ have more than one common point (i.e., if for any $j=1,2$ there is
$k=1,2,3$ for which the equality $u_k=w_j$ holds or, which is the same, if $s$ 
is a side of $t$), then directly from the definition, it is clear that the pair 
$(s,t)$ does not generate self-intersections of~$f(M)$, and we go to {\bf Step 5.}

{\bf Step~3.} 
[The case when $t$ and $s$ are not incident to each other.]

Calculate two determinants
$6\operatorname{Vol}({\bf y}_1, {\bf y}_2, {\bf y}_3, {\bf z}_1)$ and
$6\operatorname{Vol}({\bf y}_1, {\bf y}_2, {\bf y}_3, {\bf z}_2)$ 
(see formulas (\ref{eq2}), (\ref{eq3})).
Then, 

$\bullet$ if at least one of these two determinants is equal to zero, then write 
the message ``The question whether $f(t)$ and~$f(s)$ do intersect requires 
additional study'' to the auxiliary file, and go to {\bf Step 5};

$\bullet$ if both determinants are non-zero and have the same sign, then we 
conclude that $f(t)\cap f(s)=\varnothing$, write nothing to the auxiliary file, 
and go to {\bf Step 5};

$\bullet$ if both determinants are non-zero and have different signs, then 
calculate three more determinants
$6\operatorname{Vol}({\bf z}_1, {\bf z}_2, {\bf y}_1, {\bf y}_2)$,
$6\operatorname{Vol}({\bf z}_1, {\bf z}_2, {\bf y}_2, {\bf y}_3)$,
$6\operatorname{Vol}({\bf z}_1, {\bf z}_2, {\bf y}_3, {\bf y}_1)$
(see formulas (\ref{eq2}), (\ref{eq4})), and proceed as follows:

{} \quad $\diamond$ if at least one of these three determinants is equal to zero, 
then write the message ``The question whether $f(t)$ and~$f(s)$ do intersect 
requires additional study'' to the auxiliary file, and go to {\bf Step 5};
  
{} \quad $\diamond$ if all three determinants are non-zero and have the same sign, 
then we conclude that $f(t)\cap f(s)\neq\varnothing$, write the message 
``$f(t)$ and $f(s)$ intersect'' to the auxiliary file, and go to {\bf Step 5};

{} \quad $\diamond$ if all three determinants are non-zero but not all have the 
same sign, then we conclude that $f(t)\cap f(s)=\varnothing$, write nothing to the
auxiliary file, and go to {\bf Step 5.}

{\bf Step 4.}
[The case when $t$ and $s$ have exactly one common point.]

Calculate two determinants
$6\operatorname{Vol}({\bf y}_1, {\bf y}_2, {\bf y}_3, {\bf z}_1)$ and
$6\operatorname{Vol}({\bf y}_1, {\bf y}_2, {\bf y}_3, {\bf z}_2)$ 
(see formulas (\ref{eq2}), (\ref{eq3})).
Taking into account that at least one of them is equal to zero, proceed as follows:

$\bullet$ if one of these determinants is not equal to zero, then we conclude 
that~$f(s)$ does not lie in the plane containing~$f(t)$ and the pair $(s,t)$ does 
not contribute to self-intersections of~$f(M)$; in this case write nothing to the
auxiliary file, and go to {\bf Step 5.}

$\bullet$ 
if both of these determinants are equal to zero, then we conclude that~ $f(s)$ 
lies in the plane containing~$f(t)$; in this case write the message ``The question
whether $f(t)$ and~$f(s)$ do intersect requires additional study'' to the auxiliary 
file, and go to {\bf Step 5}.

{\bf Step 5.} 
[Move to the next pair $(t,s)$ or terminate the algorithm.]

Do this:

$\bullet$ 
if $(t,s)$ is not the maximal element of the linearly ordered set~$(T\times S, <)$, 
then replace $(t,s)$ with the next element in the order and go to {\bf Step 2};

$\bullet$ if $(t,s)$ is the maximal element of~$(T\times S, <)$, then output the
auxiliary file and finish the algorithm.

The main result of \S~\ref{sec5} is the following lemma.

\begin{lemma}\label{lemma3}
Modified Steffen polyhedron $\mathscr{M}$ is combinatorially equivalent to some 
partition of the sphere, has only triangular faces, has no self-intersections, 
and is flexible.
\end{lemma}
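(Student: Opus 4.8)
The plan is to establish Lemma~\ref{lemma3} by verifying its four assertions in turn, three of which follow immediately from the construction in \S\S~\ref{sec3}--\ref{sec4}, while the genuinely computational content is the absence of self-intersections. First I would observe that the combinatorial type and the triangularity of faces are inherited directly from the gluing procedure: $\mathscr{M}$ is obtained from $\mathscr{S}$ by changing a single edge length, so it is combinatorially identical to $\mathscr{S}$, which in turn was built by gluing a tetrahedron and two Bricard octahedra of type~1 along triangular faces and along one diagonal. Each constituent piece has only triangular faces, and gluing along faces and segments preserves this, so $\mathscr{M}$ has only triangular faces and is combinatorially equivalent to a triangulation (partition) of $\mathbb{S}^2$. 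The flexibility of $\mathscr{M}$ is equally immediate: the flex of $\mathscr{S}$ parametrised by the position of $\overline{C}_2=C_2$ on the circle $\gamma$ does not involve the edge $T_1T_4$ in an essential way, so changing $|T_1T_4|$ to $\sqrt{334}$ leaves a nondegenerate one-parameter family of noncongruent polyhedra, witnessing that $\mathscr{M}$ is flexible.

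The main obstacle, and the heart of the lemma, is proving that $\mathscr{M}$ has no self-intersections. Here I would invoke the segment--triangle intersection algorithm developed earlier in \S~\ref{sec5}. Concretely, I would list the $9$ vertices of $\mathscr{M}$ with their exact coordinates from Table~\ref{tab1} (using the symmetry $\overline{B}_1=L(A_2)$, $\overline{A}_2=L(B_1)$ of Lemma~\ref{lemma2} to fill in the last two rows), then form the list of edges~$S$ and the list of faces~$T$. The algorithm reduces the existence of a self-intersection point of $f\colon M\to\mathbb{R}^3$ to the existence of a face--edge intersection, as justified by Figure~\ref{f5}: any transversal self-intersection forces one of the two participating triangles to be pierced by an edge of the other. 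Running the algorithm over all pairs $(t,s)\in T\times S$ via the oriented-volume determinants~\eqref{eq3} and~\eqref{eq4}, computed symbolically in \emph{Mathematica}, I would verify that for every pair the algorithm returns $f(t)\cap f(s)=\varnothing$ (after excluding the incident pairs handled in Steps~2 and~4).

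The delicate point will be the handling of the ``exceptional'' cases, that is, those pairs $(t,s)$ for which some determinant in~\eqref{eq3} or~\eqref{eq4} vanishes and the algorithm only reports that additional study is required. I expect these to arise precisely for incident face--edge pairs and for a small number of coplanar configurations; each such pair must be examined individually to confirm it does not produce a genuine self-intersection. Because the coordinates in Table~\ref{tab1} are exact algebraic numbers expressed in radicals involving $\sqrt{31}$, $\sqrt{167}$, and $\sqrt{\rho}$, the symbolic determinant evaluations are exact, so the sign determinations are rigorous and no floating-point ambiguity can creep in. Once the algorithm has examined every nonincident pair and every exceptional pair has been resolved by hand, injectivity of $f$ follows, and hence $\mathscr{M}$ has no self-intersections. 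Collecting the four verified properties completes the proof of Lemma~\ref{lemma3}.
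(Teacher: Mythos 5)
Your proposal is correct and follows essentially the same route as the paper: the combinatorial type, triangularity of faces, and flexibility are read off directly from the construction, and the absence of self-intersections is certified by running the algorithm of \S~\ref{sec5} on the exact (radical) coordinates of Table~\ref{tab1} using purely symbolic computation in \emph{Mathematica}. The only minor divergence is that you anticipate having to resolve ``exceptional'' coplanar cases by hand, whereas the paper reports that for $\mathscr{M}$ the computation flagged no exceptional cases at all (incident pairs being dispatched by Steps~2 and~4), so no additional case-by-case analysis was required.
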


\begin{proof}
It follows directly from the construction of~$\mathscr{M}$ that it is 
combinatorially equivalent to the triangulation of the sphere shown 
in Fig.~\ref{f4}, and is flexible.
 
We checked the absence of self-intersections using the algorithm described above,
implemented in \textit{Mathematica} using exclusively symbolic calculations.
Table~\ref{tab1} provides us with the list of vertices of~$\mathscr{M}$
and expressions in radicals for all their coordinates.
Lists of edges and faces of~$\mathscr{M}$ can be easily compiled using Fig.~\ref{f4}.
The calculation time was about 0.01 of a second. 
Our algorithm found no self-intersections of~$\mathscr{M}$ or ``exceptional'' cases
requiring additional study.
Based on this, we consider Lemma~\ref{lemma3} proven.
\end{proof}

Although this is not necessary for solution of Problem~\ref{probl1}, we have 
verified that Steffen polyhedron $\mathscr{S}$ has no self-intersections.
To do this, we applied to~$\mathscr{S}$ reasoning and calculations similar to 
those given in the proof of Lemma~\ref{lemma3}.
Our algorithm found no self-intersections of~$\mathscr{S}$ or ``exceptional'' cases
requiring additional study.

\section{Construction of polyhedron $\mathscr{P}$}\label{sec6} 
 
Let $\mathscr{M}$ be the modified Steffen polyhedron constructed in \S~\ref{sec4}.
Let $K:\mathbb{R}^3\to\mathbb{R}^3$ be the rotation around the $z$-axis by 
$90^{\circ}$ such that $K(T_3)=T_2$.
In the coordinate system constructed in \S~\ref{sec4} and associated with 
$\mathscr{M}$, the rotation $K$ corresponds to the matrix
\begin{equation}
K=\begin{pmatrix} 
0 & -1 & 0 \\ 
1 & 0 & 0 \\
0 & 0 & 1  
\end{pmatrix}. \label{eq5}
\end{equation}  
Let us consider the images of~$\mathscr{M}$ under the action of the maps 
$K$, $K^2=K\circ K$ and $K^3=K\circ K^2$, and glue them along the coincided faces.
We denote the resulting polyhedron by $\mathscr{P}$.
It has 26 vertices, 72 edges and 48 faces.

Informally speaking, the main idea of constructing~$\mathscr{P}$ is to ``surround'' 
the edge $T_1T_4$ of~$\mathscr{M}$ with isometric copies of~$\mathscr{M}$ so that 
$T_1T_4$ is no longer an edge of~$\mathscr{P}$ and, thus, the question ``whether
the dihedral angle attached to $T_1T_4$ is constant'' makes no sense.
However, for what follows, such a general idea of~$\mathscr{P}$ will not be 
sufficient.
Therefore, we are forced to go into detail. 

We denote the vertices of polyhedron $K(\mathscr{M})$ with the same letters 
as the corresponding vertices of~$\mathscr{M}$, but we provide them with a prime.
Moreover, we use the same convention as in \S~\ref{sec3}, namely: if a vertex 
of~$K(\mathscr{M})$ coincides with a vertex of~ $\mathscr{M}$ so that they must 
be glued together and must be considered as a single vertex of~$\mathscr{P}$, 
then for this vertex we always use the notation that it had in~$\mathscr{M}$.
For example, vertex $T'_1=K(T_1)$ coincides with $T_1$ and in~$\mathscr{P}$ is 
denoted by $T_1$; vertex $T'_4=K(T_4)$ coincides with $T_4$ and is denoted by $T_4$;
finally, vertex $T'_3=K(T_3 )$ coincides with $T_2$ and is denoted by $T_2$.
Using these notations we can say that $\mathscr{M}$ and $K(\mathscr{M})$ are glued 
along the coinciding faces $T_1T_2T_4$ and $T'_1T'_3T'_4$, which after gluing 
``disappear'' so that they are not faces of~$\mathscr{P}$.

Similarly, we denote the vertices of polyhedra~$K^2(\mathscr{M})$ 
and~$K^3(\mathscr{M})$ with the same letters as the corresponding vertices 
of~$\mathscr{M}$, but we provide their with two and three primes, respectively. 
Moreover, if some vertex of~$\mathscr{P}$ appears in our constructions several 
times, then we assign to it the name that contains the minimum number of primes.
For example, in this notation~$K^3(\mathscr{M})$ and~$\mathscr{M}$ are glued 
along the coincided faces $T'''_1T'''_2T'''_4$ and~$T_1T_3T_4$; and vertex~$T'''_2$ 
of~$K^3(\mathscr{M})$ receives the designation $T_3$ in~$\mathscr{P}$.

Using these notations and Table~\ref{tab1} we can easily find the coordinates 
of any vertex of~$\mathscr{P}$. For example,
\begin{eqnarray*}
A'''_2&=&K^3(A_2)=
\begin{pmatrix} 
0 & 1 & 0 \\ 
-1 & 0 & 0 \\
0 & 0 & 1  
\end{pmatrix}A_2= \notag \\
&=&\biggl(\frac{\omega_2\sqrt{2}+2\sqrt{\rho}}{15573468962},
-\frac{\omega_1\sqrt{2}-2(200-33\sqrt{31})\sqrt{\rho}}{1230304047998},
\frac{167\omega_3\sqrt{2}+22\sqrt{\rho}}{15573468962\sqrt{167}}\biggr)
\approx (8.89, 1.19, 4.72),
\end{eqnarray*}
where the expressions $\omega_1$, $\omega_2$, $\omega_3$ and $\rho$ 
are defined in the caption to Table~\ref{tab1}.

The list of all vertices of~$\mathscr{P}$ along with their coordinates 
is given in Table~\ref{tab2}.
\begin{table}[h]
\begin{tabular}{llll}
\hline
   &\hphantom{A}{\small$x$-coordinate} 
   &\hphantom{A}{\small$y$-coordinate} 
   &\hphantom{A}{\small$z$-coordinate}\\
\hline
$T_1$   & {\small\hphantom{$-$}$0$} & {\small\hphantom{$-$}$0$} 
        & {\small\hphantom{$-$}$\frac{\sqrt{167}}{\sqrt{2}}\approx 9.13 
        $} \\
$T_2$   & {\small\hphantom{$-$}$0$} 
        & {\small\hphantom{$-$}$\frac{11}{\sqrt{2}}\approx 7.77 
        $} 
        & {\small\hphantom{$-$}$0$} \\
$T_3$   & {\small\hphantom{$-$}$\frac{11}{\sqrt{2}}\approx 7.77 
        $} 
        & {\small\hphantom{$-$}$0$} & {\small\hphantom{$-$}$0$} \\
$T_4$   & {\small\hphantom{$-$}$0$} & {\small\hphantom{$-$}$0$} 
        & {\small$-\frac{\sqrt{167}}{\sqrt{2}}\approx -9.13 
        $} \\ 
$C_2$   & {\small\hphantom{$-$}$\frac{11+3\sqrt{31}}{2\sqrt{2}}\approx 9.79 
        $} 
        & {\small\hphantom{$-$}$\frac{11+3\sqrt{31}}{2\sqrt{2}}\approx 9.79 
        $} 
        & {\small\hphantom{$-$}$0$} \\
$A_2$   & {\small\hphantom{$-$}$\frac{\omega_1\sqrt{2}-
        2(200-33\sqrt{31})\sqrt{\rho}}{1230304047998}\approx -1.19$} 
        & {\small\hphantom{$-$}$\frac{\omega_2\sqrt{2}+2\sqrt{\rho}}{15573468962}
        \approx 8.89 
        $} 
        & {\small\hphantom{$-$}$\frac{167\omega_3\sqrt{2}+
        22\sqrt{\rho}}{15573468962\sqrt{167}}\approx 4.72 
        $} \\
$B_1$   & {\small\hphantom{$-$}$\frac{\omega_2\sqrt{2}-
        2\sqrt{\rho}}{15573468962}\approx 2.79 
        $} 
& {\small\hphantom{$-$}$\frac{\omega_1\sqrt{2}+
        2(200-33\sqrt{31})\sqrt{\rho}}{1230304047998}\approx 0.05
        $}
& {\small\hphantom{$-$}$\frac{167\omega_3\sqrt{2}-
        22\sqrt{\rho}}{15573468962\sqrt{167}}\approx -0.46 
        $} \\
$\overline{A}_2$    & {\small\hphantom{$-$}$\frac{\omega_1\sqrt{2}+
        2(200-33\sqrt{31})\sqrt{\rho}}{1230304047998}\approx 0.05
        $} 
        & {\small\hphantom{$-$}$\frac{\omega_2\sqrt{2}-
        2\sqrt{\rho}}{15573468962}\approx 2.79 
        $}  
& {\small$-\frac{167\omega_3\sqrt{2}-
        22\sqrt{\rho}}{15573468962\sqrt{167}}\approx 0.46 
        $} \\
$\overline{B}_1$    & {\small\hphantom{$-$}$\frac{\omega_2\sqrt{2}+
        2\sqrt{\rho}}{15573468962}\approx 8.89 
        $} 
        & {\small\hphantom{$-$}$\frac{\omega_1\sqrt{2}-
        2(200-33\sqrt{31})\sqrt{\rho}}{1230304047998}\approx -1.19
        $} 
        & {\small$-\frac{167\omega_3\sqrt{2}+
        22\sqrt{\rho}}{15573468962\sqrt{167}}\approx -4.72 
        $} \\
        \hline 
$T'_2$  & {\small$-\frac{11}{\sqrt{2}}\approx -7.77 
        $} 
        & {\small\hphantom{$-$}$0
        $} 
        & {\small\hphantom{$-$}$0$} \\
$C'_2$  & {\small$-\frac{11+3\sqrt{31}}{2\sqrt{2}}\approx -9.79 
        $} 
        & {\small\hphantom{$-$}$
        \frac{11+3\sqrt{31}}{2\sqrt{2}}\approx 9.79        
        $} 
        & {\small\hphantom{$-$}$0$} \\
$A'_2$  & {\small$-\frac{\omega_2\sqrt{2}+2\sqrt{\rho}}{15573468962}
        \approx -8.89 
        $} 
        & {\small\hphantom{$-$}$\frac{\omega_1\sqrt{2}-
        2(200-33\sqrt{31})\sqrt{\rho}}{1230304047998}\approx -1.19
        $} 
        & {\small\hphantom{$-$}$\frac{167\omega_3\sqrt{2}+
        22\sqrt{\rho}}{15573468962\sqrt{167}}\approx 4.72 
        $} \\
$B'_1$   & {\small$-\frac{\omega_1\sqrt{2}+
        2(200-33\sqrt{31})\sqrt{\rho}}{1230304047998}\approx -0.05
        $}
        & {\small\hphantom{$-$}$\frac{\omega_2\sqrt{2}-
        2\sqrt{\rho}}{15573468962}\approx 2.79 
        $} 
        & {\small\hphantom{$-$}$\frac{167\omega_3\sqrt{2}-
        22\sqrt{\rho}}{15573468962\sqrt{167}}\approx -0.46  
        $} \\
$\overline{A}'_2$    & {\small$-\frac{\omega_2\sqrt{2}-
        2\sqrt{\rho}}{15573468962}\approx -2.79 
		$} 
        & {\small\hphantom{$-$}$\frac{\omega_1\sqrt{2}+
        2(200-33\sqrt{31})\sqrt{\rho}}{1230304047998}\approx 0.05
        $}          
		& {\small$-\frac{167\omega_3\sqrt{2}-
        22\sqrt{\rho}}{15573468962\sqrt{167}}\approx 0.46 
        $} \\
$\overline{B}'_1$    & {\small$-\frac{\omega_1\sqrt{2}-
        2(200-33\sqrt{31})\sqrt{\rho}}{1230304047998}\approx 1.19
        $} 
        & {\small\hphantom{$-$}$\frac{\omega_2\sqrt{2}+
        2\sqrt{\rho}}{15573468962}\approx 8.89 
        $} 
        & {\small$-\frac{167\omega_3\sqrt{2}+
        22\sqrt{\rho}}{15573468962\sqrt{167}}\approx -4.72 
        $} \\
\hline
$T''_2$  & {\small$0
        $} 
        & {\small$-\frac{11}{\sqrt{2}}\approx -7.77 
        $}         
        & {\small\hphantom{$-$}$0$} \\
$C''_2$  & {\small$-\frac{11+3\sqrt{31}}{2\sqrt{2}}\approx -9.79        
        $}  
        & {\small$-\frac{11+3\sqrt{31}}{2\sqrt{2}}\approx -9.79 
        $}
        & {\small\hphantom{$-$}$0$} \\
$A''_2$  & {\small$-\frac{\omega_1\sqrt{2}-
        2(200-33\sqrt{31})\sqrt{\rho}}{1230304047998}\approx 1.19
        $}  
        & {\small$-\frac{\omega_2\sqrt{2}+2\sqrt{\rho}}{15573468962}
        \approx -8.89  
        $}
        & {\small\hphantom{$-$}$\frac{167\omega_3\sqrt{2}+
        22\sqrt{\rho}}{15573468962\sqrt{167}}\approx 4.72 
        $} \\
$B''_1$  & {\small$-\frac{\omega_2\sqrt{2}-
        2\sqrt{\rho}}{15573468962}\approx -2.79 
        $} 
        & {\small$-\frac{\omega_1\sqrt{2}+
        2(200-33\sqrt{31})\sqrt{\rho}}{1230304047998}\approx -0.05
        $}
        & {\small\hphantom{$-$}$\frac{167\omega_3\sqrt{2}-
        22\sqrt{\rho}}{15573468962\sqrt{167}}\approx -0.46  
        $} \\
$\overline{A}''_2$    & {\small$-\frac{\omega_1\sqrt{2}+
        2(200-33\sqrt{31})\sqrt{\rho}}{1230304047998}\approx -0.05
        $}          
        & {\small$-\frac{\omega_2\sqrt{2}-
        2\sqrt{\rho}}{15573468962}\approx -2.79 
		$} 
		& {\small$-\frac{167\omega_3\sqrt{2}-
        22\sqrt{\rho}}{15573468962\sqrt{167}}\approx 0.46 
        $} \\
$\overline{B}''_1$    & {\small$-\frac{\omega_2\sqrt{2}+
        2\sqrt{\rho}}{15573468962}\approx -8.89 
        $} 
        & {\small$-\frac{\omega_1\sqrt{2}-
        2(200-33\sqrt{31})\sqrt{\rho}}{1230304047998}\approx 1.19
        $} 
        & {\small$-\frac{167\omega_3\sqrt{2}+
        22\sqrt{\rho}}{15573468962\sqrt{167}}\approx -4.72 
        $} \\
\hline
$C'''_2$  & {\small\hphantom{$-$}$\frac{11+3\sqrt{31}}{2\sqrt{2}}\approx 9.79
        $}
        & {\small$-\frac{11+3\sqrt{31}}{2\sqrt{2}}\approx -9.79         
        $} 
        & {\small\hphantom{$-$}$0$} \\
$A'''_2$  & {\small\hphantom{$-$}$\frac{\omega_2\sqrt{2}+2\sqrt{\rho}}{15573468962}
        \approx 8.89  
        $}
        & {\small$-\frac{\omega_1\sqrt{2}-
        2(200-33\sqrt{31})\sqrt{\rho}}{1230304047998}\approx 1.19
        $}  
        & {\small\hphantom{$-$}$\frac{167\omega_3\sqrt{2}+
        22\sqrt{\rho}}{15573468962\sqrt{167}}\approx 4.72 
        $} \\
$B'''_1$  & {\small\hphantom{$-$}$\frac{\omega_1\sqrt{2}+
        2(200-33\sqrt{31})\sqrt{\rho}}{1230304047998}\approx 0.05
        $}
        & {\small$-\frac{\omega_2\sqrt{2}-
        2\sqrt{\rho}}{15573468962}\approx -2.79 
        $} 
        & {\small\hphantom{$-$}$\frac{167\omega_3\sqrt{2}-
        22\sqrt{\rho}}{15573468962\sqrt{167}}\approx -0.46 
        $} \\
$\overline{A}'''_2$    & {\small\hphantom{$-$}$\frac{\omega_2\sqrt{2}-
        2\sqrt{\rho}}{15573468962}\approx 2.79 
		$} 
        & {\small$-\frac{\omega_1\sqrt{2}+
        2(200-33\sqrt{31})\sqrt{\rho}}{1230304047998}\approx -0.05 
        $}  
		& {\small$-\frac{167\omega_3\sqrt{2}-
        22\sqrt{\rho}}{15573468962\sqrt{167}}\approx 0.46 
        $} \\
$\overline{B}'''_1$    & {\small\hphantom{$-$}$\frac{\omega_1\sqrt{2}-
        2(200-33\sqrt{31})\sqrt{\rho}}{1230304047998}\approx -1.19
        $} 
        & {\small$-\frac{\omega_2\sqrt{2}+
        2\sqrt{\rho}}{15573468962}\approx -8.89 
        $} 
        & {\small$-\frac{167\omega_3\sqrt{2}+
        22\sqrt{\rho}}{15573468962\sqrt{167}}\approx -4.72 
        $} \\
\hline
\newline
\end{tabular}
\caption{Exact and approximate values of the coordinates of the vertices 
of~$\mathscr{P}$. 
The expressions $\omega_1$, $\omega_2$, $\omega_3$ and $\rho$ 
are defined in the caption to Table~\ref{tab1}.
All decimal places in approximate values are correct.}\label{tab2}
\end{table}

We formulate the properties of~$\mathscr{P}$ related to our research 
in the following lemma.

\begin{lemma}\label{lemma4}
Polyhedron $\mathscr{P}$ is combinatorially equivalent to some triangulation 
of the sphere, has no self-intersections, and is flexible.
\end{lemma}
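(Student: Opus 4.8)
The plan is to verify the three assertions in turn, drawing on Lemma~\ref{lemma3} (the corresponding properties of $\mathscr{M}$) and the explicit vertex coordinates in Table~\ref{tab2}. First, every face of $\mathscr{P}$ is a triangle, since the faces of $\mathscr{M}$ are triangles and each $K^i$ is an isometry, so $K^i(\mathscr{M})$ is again triangulated. To identify the combinatorial type, I would use the fact that the only two faces of $\mathscr{M}$ incident to the edge $T_1T_4$ are $T_1T_2T_4$ and $T_1T_3T_4$. Removing from the sphere $\mathscr{M}$ the interiors of these two triangles together with the now-interior edge $T_1T_4$ leaves a closed disk bounded by the $4$-cycle $T_1T_2T_4T_3$, whose two boundary arcs $T_1T_2T_4$ and $T_1T_3T_4$ run between the ``poles'' $T_1$ and $T_4$. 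By construction the four disks obtained from $\mathscr{M},K\mathscr{M},K^2\mathscr{M},K^3\mathscr{M}$ are glued cyclically, the $T_2$-arc of each being identified with the $T_3$-arc of the next (for instance $T_1T_2T_4$ of $\mathscr{M}$ with $K(T_1T_3T_4)=T_1T_2T_4$, since $K(T_3)=T_2$). Thus $\mathscr{P}$ is obtained by gluing four ``lune'' disks in a cycle along boundary arcs sharing the two poles $T_1,T_4$, which is precisely the meridian decomposition of a $2$-sphere; a numerical check gives $V-E+F=26-72+48=2$. Hence $\mathscr{P}$ is combinatorially a triangulated sphere.

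For the absence of self-intersections I would apply the segment--triangle algorithm of \S~\ref{sec5}, implemented in \textit{Mathematica} with purely symbolic arithmetic, exactly as in the proof of Lemma~\ref{lemma3}: the vertex list with exact coordinates is read off from Table~\ref{tab2}, and the edge and face lists from the construction. The key observation is that embeddedness of each single copy $K^i(\mathscr{M})$ is automatic, being an isometric image of the embedded polyhedron $\mathscr{M}$; consequently any self-intersection of $\mathscr{P}$ must involve a face and an edge lying in \emph{distinct} copies, and these cross-copy pairs are exactly what the exhaustive loop over $(t,s)\in T\times S$ tests. I expect the algorithm to report no intersecting pairs and, crucially, no ``exceptional'' (degenerate) cases; should isolated exceptional cases appear along the shared $T$-faces, they would be resolved by hand. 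This computational verification is the main obstacle, as it is the one step not reducible to the already-established properties of $\mathscr{M}$.

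Finally, flexibility of $\mathscr{P}$ follows from that of $\mathscr{M}$ once one locates where the copies are joined. During the flex of $\mathscr{M}$ the tetrahedron $\mathscr{T}=T_1T_2T_3T_4$ stays rigid, so the vertices $T_1,\dots,T_4$—and therefore the faces $T_1T_2T_4$, $T_1T_3T_4$ along which the copies are glued—remain fixed in space, while only the Bricard vertices $A_2,B_1,C_2,\overline{A}_2,\overline{B}_1$ move. Since distinct copies share only these fixed $T$-faces and carry pairwise disjoint sets of Bricard vertices (cf.\ Table~\ref{tab2}), one may flex all four copies simultaneously in a $K$-equivariant manner: writing $\{\mathscr{M}_t\}$ for the flex of $\mathscr{M}$, set $\mathscr{P}_t=\mathscr{M}_t\cup K(\mathscr{M}_t)\cup K^2(\mathscr{M}_t)\cup K^3(\mathscr{M}_t)$. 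The gluings persist because the shared faces do not move during the flex, the corresponding faces stay congruent throughout, and, since by definition the flex of $\mathscr{M}$ alters the distance between some pair of its vertices, the same pair witnesses that the members $\mathscr{P}_t$ are pairwise non-congruent. Hence $\{\mathscr{P}_t\}$ is a flex of $\mathscr{P}$; a concrete one-parameter flex and the behaviour of the individual dihedral angles are produced in \S~\ref{sec7}.
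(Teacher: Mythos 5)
Your proposal is correct and follows essentially the same route as the paper: combinatorial sphericity and flexibility are read off from the construction (you merely make explicit the lune decomposition, the Euler count, and the $K$-equivariant flex with the rigid $T$-faces as fixed gluing locus), and embeddedness is delegated to the same symbolic segment--triangle algorithm of \S~\ref{sec5} applied with the exact coordinates of Table~\ref{tab2}, exactly as in the proof of Lemma~\ref{lemma3}. Your extra observation that only cross-copy face--edge pairs can produce intersections is a harmless refinement the paper does not need, since its algorithm exhaustively tests all pairs anyway.
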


\begin{proof}
It follows directly from the construction of~$\mathscr{P}$ that it is 
combinatorially equivalent to a triangulation of the sphere and is flexible.

We checked the absence of self-intersections using the algorithm that was
described in~\S~\ref{sec5} and was implemented in \textit{Mathematica} using 
exclusively symbolic calculations.
Table~\ref{tab2} provides us with the list of vertices of~$\mathscr{P}$
and expressions in radicals for all their coordinates.
Lists of edges and faces of~$\mathscr{P}$ are given in Tables~\ref{tab3} 
and~\ref{tab4}, respectively.
The calculation time was less than 0.1 of a second. 
Our algorithm found no self-intersections of~$\mathscr{P}$ or ``exceptional'' cases
requiring additional study.
\end{proof}

\begin{table}[t]
\begin{tabular}{lllll}
\hline
    {\small $\mathscr{P}\cap\mathscr{M}$}\hphantom{AAAAAAA}
   &{\small $\mathscr{P}\cap K(\mathscr{M})$}\hphantom{AAAAAAA} 
   &{\small $\mathscr{P}\cap K^2(\mathscr{M})$}\hphantom{AAAAAAA} 
   &{\small $\mathscr{P}\cap K^3(\mathscr{M})$}\hphantom{AAAAAAA}
   & \hphantom{A} \\
\hline
 $T_1T_2$   &  $T_1T'_2$  &  $T_1T''_2$  &  \hphantom{A}$\varnothing$ 
 & \ref{lemma7}   \\
 $T_1T_3$   &  \hphantom{A}$\varnothing$ &  \hphantom{A}$\varnothing$   
 & \hphantom{A}$\varnothing$   & \ref{lemma7} \\
 $T_1A_2$   &  $T_1A'_2$  &  $T_1A''_2$  &  $T_1A'''_2$ & \ref{lemma5} \\
 $T_1B_1$   &  $T_1B'_1$  &  $T_1B''_1$  &  $T_1B'''_1$ & \ref{lemma5} \\
 $T_2T_4$   &  $T'_2T_4$  &  $T''_2T_4$  &  \hphantom{A}$\varnothing$ 
 & \ref{lemma7}   \\
 $T_2C_2$   &  $T'_2C'_2$ & $T''_2C''_2$ &  $T_3C'''_2$ & \ref{lemma6} \\
 $T_2A_2$   &  $T'_2A'_2$ & $T''_2A''_2$ &  $T_3A'''_2$ & \ref{lemma5}\\
 $T_2\overline{A}_2$   &  $T'_2\overline{A}'_2$  & $T''_2\overline{A}''_2$  
 & $T_3\overline{A}'''_2$ & \ref{lemma5} \\
 $T_3T_4$   &  \hphantom{A}$\varnothing$ &  \hphantom{A}$\varnothing$    
 & \hphantom{A}$\varnothing$  & \ref{lemma7}   \\
 $T_3C_2$   &  $T_2C'_2$  & $T'_2C''_2$  &  $T''_2C'''_2$ & \ref{lemma6} \\
 $T_3B_1$   &  $T_2B'_1$  & $T'_2B''_1$  &  $T''_2B'''_1$ & \ref{lemma5} \\
 $T_3\overline{B}_1$   &  $T_2\overline{B}'_1$  & $T'_2\overline{B}''_1$  &
 $T''_2\overline{B}'''_1$  & \ref{lemma5} \\
 $T_4\overline{A}_2$   &  $T_4\overline{A}'_2$  & $T_4\overline{A}''_2$  &
 $T_4\overline{A}'''_2$  & \ref{lemma5} \\
 $T_4\overline{B}_1$   &  $T_4\overline{B}'_1$  & $T_4\overline{B}''_1$  &
 $T_4\overline{B}'''_1$ & \ref{lemma5} \\
 $C_2A_2$   &  $C'_2A'_2$ & $C''_2A''_2$ & $C'''_2A'''_2$ & \ref{lemma5} \\
 $C_2B_1$   &  $C'_2B'_1$ & $C''_2B''_1$ & $C'''_2B'''_1$ & \ref{lemma5} \\
 $C_2\overline{A}_2$   &  $C'_2\overline{A}'_2$ & $C''_2\overline{A}''_2$  &
 $C'''_2\overline{A}'''_2$ & \ref{lemma5} \\
 $C_2\overline{B}_1$   &  $C'_2\overline{B}'_1$ & $C''_2\overline{B}''_1$  &
 $C'''_2\overline{B}'''_1$ & \ref{lemma5} \\
 $A_2B_1$   &  $A'_2B'_1$ & $A''_2B''_1$ & $A'''_2B'''_1$ & \ref{lemma5} \\
 $\overline{A}_2\overline{B}_1$   &  $\overline{A}'_2\overline{B}'_1$  & 
 $\overline{A}''_2\overline{B}''_1$  &  $\overline{A}'''_2\overline{B}'''_1$ 
 & \ref{lemma5}\\
 \hline
 \newline
\end{tabular}
\caption{List of all 72 edges of polyhedron~$\mathscr{P}$. 
The sign $\varnothing$ marks cells that are intentionally left empty because the 
edge corresponding to such a cell has already appeared (in another row) in one of 
the columns located to the left.
The right column contains the numbers of lemmas, from the proof of which it follows 
that the dihedral angle at each edge in this row is not constant.}\label{tab3}
\end{table}
\begin{table}[h]
\begin{tabular}{llll}
\hline
    {\small $\mathscr{P}\cap\mathscr{M}$}\hphantom{AAAAAAA}
   &{\small $\mathscr{P}\cap K(\mathscr{M})$}\hphantom{AAAAAAA} 
   &{\small $\mathscr{P}\cap K^2(\mathscr{M})$}\hphantom{AAAAAAA} 
   &{\small $\mathscr{P}\cap K^3(\mathscr{M})$}\hphantom{AAAAAAA}\\
\hline
 $T_1T_2A_2$   & $T_1T'_2A'_2$  & $T_1T''_2A''_2$  & $T_1T_3A'''_2$    \\
 $T_1T_3B_1$   & $T_1T_2B'_1$   & $T_1T'_2B''_1$   & $T_1T''_2B'''_1$  \\
 $T_1A_2B_1$   & $T_1A'_2B'_1$  & $T_1A''_2B''_1$  & $T_1A'''_2B'''_1$  \\
 $T_2T_4\overline{A}_2$   &  $T'_2T_4\overline{A}'_2$   & $T''_2T_4\overline{A}''_2$  &  $T_3T_4\overline{A}'''_2$ \\  
 $T_2C_2A_2$   & $T'_2C'_2A'_2$ & $T''_2C''_2A''_2$  &  $T_3C'''_2A'''_2$ \\
 $T_2C_2\overline{A}_2$   & $T'_2C'_2\overline{A}'_2$   & $T''_2C''_2\overline{A}''_2$  &  $T_3C'''_2\overline{A}'''_2$ \\
 $T_3T_4\overline{B}_1$   & $T_2T_4\overline{B}'_1$  & $T'_2T_4\overline{B}''_1$  & $T''_2T_4\overline{B}'''_1$  \\
 $T_3C_2B_1$   & $T_2C'_2B'_1$  & $T'_2C''_2B''_1$  & $T''_2C'''_2B'''_1$  \\
 $T_3C_2\overline{B}_1$   & $T_2C'_2\overline{B}'_1$  & $T'_2C''_2\overline{B}''_1$  &  $T''_2C'''_2\overline{B}'''_1$ \\
 $T_4\overline{A}_2\overline{B}_1$   & $T_4\overline{A}'_2\overline{B}'_1$  & $T_4\overline{A}''_2\overline{B}''_1$  &  $T_4\overline{A}'''_2\overline{B}'''_1$ \\
 $C_2A_2B_1$   & $C'_2A'_2B'_1$  & $C''_2A''_2B''_1$  & $C'''_2A'''_2B'''_1$  \\
 $C_2\overline{A}_2\overline{B}_1$   & $C'_2\overline{A}'_2\overline{B}'_1$  & $C''_2\overline{A}''_2\overline{B}''_1$  &  $C'''_2\overline{A}'''_2\overline{B}'''_1$ \\
 \hline
 \newline
\end{tabular}
\caption{List of all 48 faces of polyhedron~$\mathscr{P}$.}\label{tab4}
\end{table}

\section{Special flex $\mathscr{P}(t)$ of polyhedron $\mathscr{P}$}\label{sec7} 

Let us start by discussing the flex of the modified Steffen polyhedron~$\mathscr{M}$.

In order to define such a flex, we need to specify the positions of all vertices 
of~$\mathscr{M}$ as continuous functions of some parameter~$t$ so that the distance
between any two vertices connected by an edge of~$\mathscr{M}$ does not depend on~$t$,
and the distance between at least two vertices not connected by an edge is not constant.
We denote each of these functions by the same symbol as the corresponding vertex 
of~$\mathscr{M}$. 
For example, the vertex $C_2\in \mathscr{M}$ corresponds to the function $C_2(t)$.
The values of these functions for a fixed $t$ are declared to be the vertices of 
the polyhedron~$\mathscr{M}(t)$ and, by definition, we put that the correspondence
``the vertex of~$\mathscr{M}$ $\leftrightarrow$ the value of the function of the 
same name'' defines the combinatorial equivalence of polyhedra $\mathscr{M}$ and 
$\mathscr{M}(t)$.
Finally, since the choice of parameter~$t$ is arbitrary, we assume without loss of
generality that $\mathscr{M}(0)=\mathscr{M}$.

The functions corresponding to the vertices of $\mathscr{M}$ are defined as follows.

We assume that the vertices $T_j$, $j=1,\dots, 4$, do not change their positions 
in space, i.e., by definition, we assume $T_j(t)=T_j$ for all $t$, where $T_j$ has 
the coordinates given in Table~\ref{tab1}.
 
Since the vertex $C_2(t)$ is connected in $\mathscr{M}(t)$ by edges to the vertices
$T_2(t)\equiv T_2$ and $T_3(t)\equiv T_3$, then it lies on the circle $\gamma$,
determined by the following conditions: $\gamma$ lies in a plane perpendicular to 
the segment $T_2T_3$; the center of~$\gamma$ coincides with the middle point 
of~$T_2T_3$; the radius of~$\gamma$ is equal to 
$\sqrt{|A_1C_2|^2-|A_1B_2|^2/4}=3\sqrt{31}/2$.
It is the movement of~$C_2(t)$ along~$\gamma$ that sets the flex of~$\mathscr{M}$ 
(here we do not care whether~$\mathscr{M}(t)$ has self-intersections).
Indeed, the positions of the five vertices $C_2(t)$, $T_j(t)$ ($j=1,\dots, 4$) 
of~$\mathscr{M}(t)$ uniquely determine the position of every of its other four vertices 
$A_2(t)$, $B_1(t)$, $\overline{A}_2(t)$ and $\overline{B}_1(t)$, because each of the
latter vertices is connected in $\mathscr{M}(t)$ by three edges to some of the five 
vertices~$C_2 (t)$, $T_j(t)$ ($j=1,\dots, 4$).
Taking into account that the position of each vertex of~$\mathscr{M}(t)$ changes
continuously and is known for $t=0$ from Table~\ref{tab1}, we are convinced that 
the positions of~$A_2(t) $, $B_1(t)$, $\overline{A}_2(t)$ and $\overline{B}_1(t)$ are
uniquely determined by the positions of~$C_2(t)$, $T_j(t)$ ( $j=1,\dots, 4$) for 
all~$t$ sufficiently close to zero (namely, until the three edges mentioned above 
are not in the same plane).
That is why we say that the flex~$\mathscr{M}(t)$ of~$\mathscr{M}$ is defined by the position of~$C_2(t)$.

Let us discuss the flex of polyhedron~$\mathscr{P}$.

By construction, $\mathscr{P}$ is obtained by gluing four copies of~$\mathscr{M}$,
namely, by gluing together polyhedra 
$\mathscr{M}$, $K(\mathscr{M})$, $K^2 (\mathscr{M})$ and $K^3(\mathscr{M})$.
Here, as before, $K:\mathbb{R}^3\to\mathbb{R}^3$ is the rotation around $z$-axis by
$90^{\circ}$; in particular, the matrix of~$K$ in the coordinate system constructed 
in \S~\ref{sec4} which is associated with~$\mathscr{P}$ is given by~(\ref{eq5}).
From what was said at the beginning of~\S~\ref{sec7} it is clear that, 
similarly to the movement of vertex $C_2\in \mathscr{M}$ along the circle $\gamma$, 
each of the vertices $C'_2\in K(\mathscr{M})$, $C''_2\in K^2(\mathscr{M})$ and
$C'''_2\in K^3(\mathscr{M})$ can be moved along the corresponding circle $K(\gamma)$,
$K^2(\gamma)$, $K^3(\gamma)$ independently of each other and of the motion of~$C_2$.
After the positions of the vertices $C_2(t)$, $C'_2(t)$, $C''_2(t)$ and $C'''_2(t)$ 
are given (and taking into account that the positions of the vertices 
$T_1$, $T_2$, $T_3 $, $T_4$, $T'_2$ and $T''_2$ do not change during the flex and 
are known to us from Table~\ref{tab2}), we can uniquely find the positions of 
all other vertices of~$\mathscr{P}(t)$.
To reflect the possibility of independent changing the positions of~$C_2$, $C'_2$, 
$C''_2$ and $C'''_2$, we say that~$\mathscr{P}$ admits a 4-parameter flex.
 
However, in accordance with the definition of a flexible polyhedron given 
in~\S~\ref{sec1}, when proving Theorem~\ref{thrm1} we need some special flex 
$\mathscr{P}(t)$ of~$\mathscr{P}$, depending on one real parameter $t$.
We distinguish it from the 4-parameter flex of~$\mathscr{P}$ just described by 
the following three conditions:

$\bullet$ $\mathscr{P}(0)=\mathscr{P}$; 

$\bullet$ for $t=0$, the speed of $C_2(t)$ is equal to $(0,0,1)$;

$\bullet$ for all $t$, $\mathscr{P}(t)$ is invariant under the rotation 
$K:\mathbb{R}^3\to\mathbb{R}^3$ around $z$-axis by $90^{\circ }$.

We denote the vertices of~$\mathscr{P}(t)$ by analogy with the corresponding 
vertices of~$\mathscr{P}$. For example, the vertices $C_2(t)$ and $A'''_2 (t)$ 
of~$\mathscr{P}(t)$ correspond to the vertices $C_2=C_2(0)$ and $A'''_2=A '''_2 (0)$ 
of~$\mathscr{P}$.
 
Recall that Lemma~\ref{lemma3} from~\S~\ref{sec5} and Lemma~\ref{lemma4} 
from~\S~\ref{sec6} state that polyhedra $\mathscr{M}$ and $\mathscr{P}$ 
have no self-intersections. 
Now we draw the reader's attention to the fact that without any additional 
calculations we can state that for all $t$ sufficiently close to zero 
polyhedra $\mathscr{M}(t)$ and $\mathscr{P}(t)$ also do not have self-intersections.
To prove this, observe that non of the determinants (\ref{eq3}) and (\ref{eq4}), 
calculated in the proofs of Lemmas~\ref{lemma3} and~\ref{lemma4}, were equal to zero.
Hence, they are nonzero for all $t$ sufficiently close to zero.
Having fixed any of these~$t$ and repeating the proofs of Lemmas~\ref{lemma3} 
and~\ref{lemma4} as applied to~ $\mathscr{M}(t)$ and $\mathscr{P}(t)$, we come 
to the conclusion that both $\mathscr{M}(t)$ and $\mathscr{P}(t)$ have 
no self-intersections.
 
Let us begin to study whether~$\mathscr{P}(t)$ has a dihedral angle, the value 
of which remains constant for all $t$ sufficiently close to zero.
For different dihedral angles we will need different arguments.

\begin{lemma}\label{lemma5}
The dihedral angle at the edge $A_2(t)T_1(t)\equiv A_2(t)T_1$ of 
polyhedron $\mathscr{P}(t)$ is not constant as a function of $t$. 
\end{lemma}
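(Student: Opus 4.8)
The plan is to reduce the constancy of this dihedral angle to the constancy of a single diagonal length, and then to refute the latter by a first-order (velocity) computation at $t=0$. According to Table~\ref{tab4}, the two faces of $\mathscr{P}$ meeting along the edge $A_2T_1$ are the triangles $T_1T_2A_2$ and $T_1A_2B_1$, so the dihedral angle in question is the dihedral angle at the edge $T_1A_2$ of the tetrahedron $T_1A_2T_2B_1$. Five of its six edge lengths, namely $|T_1A_2|$, $|T_1T_2|$, $|T_2A_2|$, $|T_1B_1|$ and $|A_2B_1|$, are edges of $\mathscr{P}$ (see Table~\ref{tab3}) and hence stay constant throughout the flex $\mathscr{P}(t)$; only the diagonal $|T_2B_1|$ may vary. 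Dropping perpendiculars from $T_2$ and $B_1$ onto the line $T_1A_2$ and applying the law of cosines gives
\begin{equation*}
|T_2B_1|^2 = c - 2h_1h_2\cos\theta,
\end{equation*}
where $\theta$ is the dihedral angle and $c,h_1,h_2$ are determined by the five fixed lengths. In particular $|T_2B_1|$ is a function of $\theta$ alone, so if $\theta$ were constant in $t$ then $|T_2B_1(t)|$ would be constant too; it therefore suffices to show that $|T_2B_1(t)|$ is \emph{not} constant.

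Since $T_2(t)\equiv T_2$ is fixed, the first thing I would do is compute the velocity $\dot B_1$ of $B_1(t)$ at $t=0$. The vertex $B_1(t)$ is joined in $\mathscr{M}(t)$ to $T_1$, $T_3$ and $C_2(t)$, so for all $t$ one has
\begin{equation*}
|B_1(t)-T_1|^2 = 100, \qquad |B_1(t)-T_3|^2 = 25, \qquad |B_1(t)-C_2(t)|^2 = 144.
\end{equation*}
Differentiating these at $t=0$ and using $\dot T_1=\dot T_3=(0,0,0)$ together with the prescribed $\dot C_2=(0,0,1)$ yields the linear system
\begin{equation*}
(B_1-T_1)\cdot\dot B_1 = 0, \qquad (B_1-T_3)\cdot\dot B_1 = 0, \qquad (B_1-C_2)\cdot\dot B_1 = (B_1-C_2)\cdot(0,0,1),
\end{equation*}
whose coefficient matrix has rows $B_1-T_1$, $B_1-T_3$, $B_1-C_2$, i.e.\ the three edge directions at $B_1$.

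Provided these three vectors are linearly independent (equivalently, the three edges at $B_1$ are not coplanar), the system has a unique solution $\dot B_1$, which can be written exactly from the radical coordinates of Table~\ref{tab1}. I would then compute
\begin{equation*}
\frac{d}{dt}\Big|_{t=0}|T_2-B_1(t)|^2 = 2\,(B_1-T_2)\cdot\dot B_1
\end{equation*}
and check that it is nonzero; this forces $|T_2B_1(t)|$, and hence $\theta$, to be non-constant near $t=0$, which proves the lemma.

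The hard part will lie precisely in this last verification: confirming, by exact symbolic computation in \textit{Mathematica} using the radical expressions of Table~\ref{tab1}, both that the $3\times 3$ coefficient matrix above is nonsingular (so that $\dot B_1$ is well defined) and that the resulting scalar $2\,(B_1-T_2)\cdot\dot B_1$ does not vanish. The conceptual reduction in the first paragraph is what makes this tractable, since it removes any need to differentiate $\arccos$ or to track face normals and leaves only a finite exact-arithmetic check.
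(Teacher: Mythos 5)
Your proposal is correct, but it proves Lemma~\ref{lemma5} by a genuinely different route than the paper. The paper's proof is purely conceptual and computation-free: it observes that both faces at the edge $A_2T_1$ (namely $A_2B_1T_1$ and $A_2B_2T_1$, where $B_2=T_2$ under the gluing conventions) belong to the Bricard octahedron $\mathscr{B}$ used to build $\mathscr{M}$, that the flex $\mathscr{P}(t)$ restricts to a genuine flex $\mathscr{B}(t)$ of that octahedron (the diagonal $C_2(t)T_1$ visibly changes), and then invokes Lemma~\ref{lemma1}, which says every dihedral angle of a Bricard octahedron is nonconstant during any flex. You instead run the infinitesimal-velocity argument that the paper reserves for Lemma~\ref{lemma6}: you reduce constancy of the angle at $T_1A_2$ to constancy of the small diagonal $|T_2B_1|$ (an equivalence the paper itself endorses in \S~\ref{sec8}), compute $\dot B_1(0)$ from the three edge constraints at $B_1$ (to $T_1$, $T_3$, $C_2(t)$, with $\dot C_2(0)=(0,0,1)$ and the $T_j$ fixed), and verify $(B_1-T_2)\cdot\dot B_1\neq 0$ symbolically. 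Your reduction is sound (one only needs the easy direction, $\theta$ constant $\Rightarrow |T_2B_1|$ constant, plus nondegeneracy of the two faces), and the numerical check does succeed: with the Table~\ref{tab1} coordinates one finds $\dot B_1(0)\approx(0.0004,\,0.047,\,0.0004)$ and $(B_1-T_2)\cdot\dot B_1\approx -0.36\neq 0$. A pleasant feature of your setup is that the $3\times 3$ coefficient matrix whose nonsingularity you must check is exactly the Jacobian of the constraint map, so the same verification that defines $\dot B_1$ also justifies differentiability of $B_1(t)$ at $t=0$. What each approach buys: the paper's argument needs no symbolic computation at all, yields nonconstancy along the entire flex rather than only near $t=0$, and, as the remark following Lemma~\ref{lemma5} notes, disposes of all 56 ``octahedral'' edges of $\mathscr{P}$ at one stroke, whereas your method would need a fresh computation (or symmetry bookkeeping) per edge; on the other hand, your argument is self-contained (it does not rest on Lemma~\ref{lemma1} or on the hypothesis there that no dihedral angle equals $0$ or $\pi$) and delivers the strictly stronger local conclusion that the angle has nonzero derivative at $t=0$.
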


\begin{proof}
In~$\mathscr{P}$, the edge $A_2T_1$ is incident to faces $A_2B_1T_1$ and $A_2B_2T_1$.
Both of these faces belong to the Bricard octahedron $\mathscr{B}$, which 
participated in the construction of the modified Steffen polyhedron $\mathscr{M}$, 
which, in turn, was used to construct polyhedron~$\mathscr{P}$ 
(see \S\S~\ref{sec2}, \ref{sec3}, \ref{sec4}, \ref{sec6}).
The flex $\mathscr{P}(t)$ of~ $\mathscr{P}$ obviously gives rise to the flex 
$\mathscr{B}(t)$ of the Bricard octahedron $\mathscr{B}$ (indeed, the length of 
each edge of~$\mathscr{B}(t)$ does not depend on $t$, while the length of the 
diagonal $C_2(t)T_1$ is obviously nonconstant as function of $t$).
According to Lemma~\ref{lemma1}, during the flex of the Bricard octahedron 
$\mathscr{B}$ the value of each of its dihedral angles does not remain constant.
Hence, the value of the dihedral angle at edge $A_2(t)T_1$, considered either as 
a dihedral angle of the Bricard octahedron $\mathscr{B}(t)$, or as a dihedral 
angle of~$\mathscr{P}(t)$, is also nonconstant. 
\end{proof}

Note that in Lemma~\ref{lemma5} the edge $A_2(t)T_1$ can be replaced by any edge 
of~$\mathscr{P}(t)$, which is incident to two faces of any Bricard octahedron
participating in the construction of~$\mathscr{P}$ (i.e., the Bricard octahedra 
$\mathscr{B}$ and $\overline{\mathscr{B}}$ in notation of~\S~\ref{sec3}, as 
well as their images under rotations $K $, $K^2$ and $K^3$). 
There are a total of 7 such edges on each part of~$\mathscr{P}$ which  corresponds 
to a single Bricard octahedron.
Hence, there are 56 such edges on~$\mathscr{P}$.
Each row of Table~\ref{tab3}, consisting entirely of such edges, ends with 
the number 5.
We use this notation to fix the fact that the dihedral angles at the edges in 
such rows are nonconstant follows from the proof of Lemma~\ref{lemma5}.
Similarly, with numbers 6 and 7 in the last column we mark those rows of 
Table~\ref{tab3} for which the inconstancy of dihedral angles at the edges in 
these rows follows from the proofs of Lemmas~\ref{lemma6} and~\ref{lemma7}, 
respectively.

So, we already know that the dihedral angles at 56 edges do not 
remain constant during the special flex of~$\mathscr{P}$.
Lemmas~\ref{lemma6} and~\ref{lemma7} are devoted to the study of the remaining 
16 dihedral angles.
But before we formulate and prove them, we need to do preparatory work.

Let ${\bf p}_{j}(t)$, $j=1,2,3,4$, be points which may change their position in space 
in such a way that the distance between any two of them, except perhaps 
${\bf p}_{3}(t)$ and ${\bf p}_{4}(t)$, do not depend on $t$.
Then, as is known (see, for example, \cite[Section 3.2]{CG22}),
their velocities ${\bf v}_{j}(t)$ satisfy the relations
\begin{equation}
({\bf p}_{j}(t)-{\bf p}_{k}(t))\cdot ({\bf v}_{j}(t)-{\bf v}_{k}(t))=0,
\qquad j,k=1,2,3,4; \ \{j,k\}\neq\{3,4\},
\label{eq6}
\end{equation}
where $\cdot$ denotes the standard scalar product in $\mathbb{R}^3$.
It is easy to understand that if the inequality 
\begin{equation}
({\bf p}_{3}(0)-{\bf p}_{4}(0))\cdot ({\bf v}_{3}(0)-{\bf v}_{4}(0))\neq 0
\label{eq7}
\end{equation}
holds true, then the derivative, calculated at $t=0$, of the dihedral angle 
between the triangles
${\bf p}_{1}(t), {\bf p}_{2}(t), {\bf p}_{3}(t)$ and
${\bf p}_{1}(t), {\bf p}_{2}(t), {\bf p}_{4}(t)$ 
is not equal to zero, which means the value of this dihedral angle is not constant as 
a function of $t$ on some sufficiently small interval in $\mathbb{R}$ containing $0$.

\begin{lemma}\label{lemma6}
Dihedral angles at edges $C_2(t)T_2(t)\equiv C_2(t)T_2$ and
$C_2(t)T_3(t)\equiv C_2(t)T_3$ of polyhedron $\mathscr{P}(t)$ are not constant as
functions of $t$ on some sufficiently small interval in $\mathbb{R}$ containing 0.
\end{lemma}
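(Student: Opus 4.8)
The plan is to apply the infinitesimal criterion encapsulated in (\ref{eq6})--(\ref{eq7}) once for each of the two edges. I first treat the edge $C_2(t)T_2$. Consulting the face list in Table~\ref{tab4}, the two faces of $\mathscr{P}$ incident to this edge are $T_2C_2A_2$ and $T_2C_2\overline{A}_2$; hence the dihedral angle at $C_2T_2$ is exactly the angle between the triangles $C_2(t)T_2A_2(t)$ and $C_2(t)T_2\overline{A}_2(t)$. I therefore set ${\bf p}_1(t)=C_2(t)$, ${\bf p}_2(t)=T_2$, ${\bf p}_3(t)=A_2(t)$, ${\bf p}_4(t)=\overline{A}_2(t)$. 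Inspecting the edge list in Table~\ref{tab3}, each of the pairs $C_2T_2$, $C_2A_2$, $C_2\overline{A}_2$, $T_2A_2$, $T_2\overline{A}_2$ is an edge of $\mathscr{P}$, so the corresponding distances are constant throughout the flex, whereas $A_2\overline{A}_2$ is a diagonal whose length need not be constant. Thus the hypotheses preceding (\ref{eq6}) are met with the single possibly-varying distance being $|{\bf p}_3{\bf p}_4|$, and it remains only to verify the nondegeneracy condition (\ref{eq7}).

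To do so I must compute the velocities ${\bf v}_3(0)=\dot A_2(0)$ and ${\bf v}_4(0)=\dot{\overline{A}}_2(0)$ of the special flex at $t=0$. The only moving neighbours here are the $C$-vertices, and by the defining properties of $\mathscr{P}(t)$ we have $\dot T_j(0)={\bf 0}$ and $\dot C_2(0)=(0,0,1)$. Since $A_2$ is joined by edges of $\mathscr{P}$ to $T_1$, $T_2$, $C_2$ (Table~\ref{tab3}), relation (\ref{eq6}) gives the three linear equations
\begin{align*}
(A_2-T_1)\cdot\dot A_2(0)&=0,\\
(A_2-T_2)\cdot\dot A_2(0)&=0,\\
(A_2-C_2)\cdot\bigl(\dot A_2(0)-(0,0,1)\bigr)&=0,
\end{align*}
which determine $\dot A_2(0)$ uniquely provided the three edge-directions at $A_2$ are linearly independent. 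The analogous system for $\overline{A}_2$, using its edges to $T_2$, $T_4$, $C_2$, determines $\dot{\overline{A}}_2(0)$. Substituting the exact radical coordinates from Table~\ref{tab2} and solving these two $3\times 3$ systems symbolically in \textit{Mathematica}, I would then evaluate $(A_2-\overline{A}_2)\cdot\bigl(\dot A_2(0)-\dot{\overline{A}}_2(0)\bigr)$ and check that it is nonzero; by (\ref{eq7}) this shows the dihedral angle at $C_2T_2$ has nonzero derivative at $t=0$, hence is nonconstant near $0$.

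The edge $C_2(t)T_3$ is handled identically. By Table~\ref{tab4} its two incident faces are $T_3C_2B_1$ and $T_3C_2\overline{B}_1$, so I set ${\bf p}_1=C_2$, ${\bf p}_2=T_3$, ${\bf p}_3=B_1$, ${\bf p}_4=\overline{B}_1$; Table~\ref{tab3} confirms that all of $C_2T_3$, $C_2B_1$, $C_2\overline{B}_1$, $T_3B_1$, $T_3\overline{B}_1$ are edges while $B_1\overline{B}_1$ is a diagonal. I compute $\dot B_1(0)$ from the edges $B_1T_1$, $B_1T_3$, $B_1C_2$ and $\dot{\overline{B}}_1(0)$ from $\overline{B}_1T_3$, $\overline{B}_1T_4$, $\overline{B}_1C_2$ via (\ref{eq6}), and verify $(B_1-\overline{B}_1)\cdot\bigl(\dot B_1(0)-\dot{\overline{B}}_1(0)\bigr)\neq 0$.

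The routine part is solving the four small linear systems; the genuine obstacle is twofold. First, each $3\times 3$ velocity system must be nonsingular --- equivalently, the relevant three edge-directions must be linearly independent --- which I would confirm by checking that the corresponding determinant does not vanish (this is also exactly what makes the flex well-defined, as noted in \S~\ref{sec7}). Second, and more delicately, the two scalar products $(A_2-\overline{A}_2)\cdot(\dot A_2-\dot{\overline{A}}_2)$ and $(B_1-\overline{B}_1)\cdot(\dot B_1-\dot{\overline{B}}_1)$ must be shown to be strictly nonzero; since the coordinates involve nested radicals in $\sqrt{31}$ and $\sqrt{\rho}$, I would certify nonvanishing either by exact symbolic simplification in \textit{Mathematica} or, failing a clean closed form, by producing a rational lower bound on the absolute value large enough to dominate the error in the decimal approximations of Table~\ref{tab2}.
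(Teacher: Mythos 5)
Your proposal is correct and takes essentially the same route as the paper: the identical choice of points ${\bf p}_1=C_2$, ${\bf p}_2=T_2$, ${\bf p}_3=A_2$, ${\bf p}_4=\overline{A}_2$, the same two $3\times 3$ velocity systems (edges of $A_2$ to $T_1,T_2,C_2$ and of $\overline{A}_2$ to $T_2,T_4,C_2$, exactly the paper's (\ref{eq8}) and (\ref{eq9})), and the same symbolic certification of the nondegeneracy condition (\ref{eq7}) in \textit{Mathematica}, where the paper finds the scalar product $\approx -0.5253\neq 0$. The only divergence is the edge $C_2T_3$: you re-solve the analogous systems for $B_1$ and $\overline{B}_1$ directly (which the paper explicitly says works), whereas the paper avoids that computation by invoking the symmetry $L$ of Lemma~\ref{lemma2}, under which $L({\bf q}_j(0))={\bf p}_j(0)$ and $L({\bf w}_j(0))=-{\bf v}_j(0)$, so the relevant scalar product is simply negated and hence nonzero.
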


\begin{proof}
By definition, put
${\bf p}_{1}(t)=C_2(t)$,
${\bf p}_{2}(t)\equiv T_2$,
${\bf p}_{3}(t)=A_2(t)$,
${\bf p}_{4}(t)=\overline{A}_2(t)$, 
${\bf p}_{5}(t)\equiv T_1$,
${\bf p}_{6}(t)\equiv T_4$, 
and denote by ${\bf v}_j(t)$, $j=1,\dots,6$, the velocity of the point~${\bf p}_j(t)$.
To prove the statement of Lemma~\ref{lemma6} it is sufficient to prove 
inequality~(\ref{eq7}) for $t=0$.
We do this using symbolic computation in \textit{Mathematica}.

The coordinates of the points ${\bf p}_{1}(0)=C_2$, ${\bf p}_{2}(0)=T_2$, 
${\bf p}_{3}(0)=A_2$, ${\bf p}_{4}(0)=\overline{A}_2$, 
${\bf p}_{5}(0)=T_1$ and ${\bf p}_{6}(0)=T_4$
in radicals we take from Table~\ref{tab1}.
The velocity vectors ${\bf v}_1(0)=(0,0,1)$ and
${\bf v}_2(0)={\bf v}_5(0)={\bf v}_6(0)=(0,0,0)$ are known to us by construction.

The components of the velocity vector ${\bf v}_3(0)$ must satisfy the following 
system of linear algebraic equations, each of which is similar to equation (\ref{eq6}):
\begin{equation}
\begin{cases}
({\bf p}_{1}(0)-{\bf p}_{3}(0))\cdot ({\bf v}_{1}(0)-{\bf v}_{3}(0))=0,\\
({\bf p}_{2}(0)-{\bf p}_{3}(0))\cdot ({\bf v}_{2}(0)-{\bf v}_{3}(0))=0,\\
({\bf p}_{5}(0)-{\bf p}_{3}(0))\cdot ({\bf v}_{5}(0)-{\bf v}_{3}(0))=0.\\
\end{cases}\label{eq8}
\end{equation}
Using \textit{Mathematica}, we solve (\ref{eq8}) and find the values of the 
components of~${\bf v}_{3}(0)$ in radicals.
However, they are too long to be written here.
So, we present their approximate numerical values only:
${\bf v}_{3}(0)\approx(-0.4602, -0.1074, -0.0914)$, 
where all decimal places are correct (i.e., we do not apply rounding rules).

Similarly, the components of the velocity vector ${\bf v}_4(0)$ must satisfy 
the system of equations
\begin{equation}
\begin{cases}
({\bf p}_{1}(0)-{\bf p}_{4}(0))\cdot ({\bf v}_{1}(0)-{\bf v}_{4}(0))=0,\\
({\bf p}_{2}(0)-{\bf p}_{4}(0))\cdot ({\bf v}_{2}(0)-{\bf v}_{4}(0))=0,\\
({\bf p}_{6}(0)-{\bf p}_{4}(0))\cdot ({\bf v}_{6}(0)-{\bf v}_{4}(0))=0.\\
\end{cases}\label{eq9}
\end{equation}
Using \textit{Mathematica}, we solve (\ref{eq9}) and find the values of the 
components of~${\bf v}_{4}(0)$ in radicals.
As before, they are too long to be written here and 
we present their approximate numerical values only:
${\bf v}_{4}(0)\approx(-0.0470, -0.0004, 0.0004)$.

Finally, using \textit{Mathematica} we evaluate the expression
\begin{equation}
({\bf p}_{3}(0)-{\bf p}_{4}(0))\cdot ({\bf v}_{3}(0)-{\bf v}_{4}(0)).
\label{eq10}
\end{equation}
The result, written in radicals, is too long to be given here; but, again 
with the help of \textit{Mathematica}, we are convinced that it is non-zero 
in the corresponding extension of the field of rational numbers.
The numerical value of expression (\ref{eq10}) is equal to $-0.5253$; 
this gives us additional confidence in the adequacy of our symbolic calculations. 
Thus, the statement of Lemma~\ref{lemma6} about the inconstancy of the dihedral 
angle at the edge $C_2(t)T_2$ of $\mathscr{P}(t)$ is proven.

The statement of Lemma~\ref{lemma6} about the inconstancy of the dihedral angle 
at the edge $C_2(t)T_3$ of $\mathscr{P}(t)$ can be proven in the same way as 
we have just proved a similar statement for the angle at the edge~$C_2(t)T_2$. 
But we prefer to avoid these additional calculations by noting that 
the rotation~$L$ defined in the statement of Lemma~\ref{lemma2} maps
the polyhedron $\mathscr{P}$ onto itself.
To present this reasoning in more detail, we introduce the notation
${\bf q}_{1}(t)=C_2(t)$,
${\bf q}_{2}(t)\equiv T_3$,
${\bf q}_{3}(t)=\overline{B}_1(t)$,
${\bf q}_{4}(t)={B}_1(t)$, 
${\bf q}_{5}(t)\equiv T_4$,
${\bf q}_{6}(t)\equiv T_1$.
Denote by ${\bf w}_j(t)$, $j=1,\dots,6$, the velocity of the point ${\bf q}_{j}(t)$.
Lemma~\ref{lemma2} implies that $L({\bf q}_{j}(0))={\bf p}_{j}(0)$ 
for all $j=1,\dots,6$. 
We have already reflected this fact in the Table~\ref{tab1}.
Taking into account that ${\bf w}_1(0)=(0,0,1)$ and
${\bf w}_2(0)={\bf w}_5(0)={\bf w}_6(0)=(0,0,0)$, 
and writing equations for finding ${\bf w}_{3}(0)$ and ${\bf w}_{4}(0)$ 
by analogy with (\ref{eq8}) and (\ref{eq9}), we obtain 
$L({\bf w}_{j}(0))=-{\bf v}_{j}(0)$, $j=1,\dots,6$.
Hence, 
$({\bf q}_{3}(0)-{\bf q}_{4}(0))\cdot ({\bf w}_{3}(0)-{\bf w}_{4}(0))=
-({\bf p}_{3}(0)-{\bf p}_{4}(0))\cdot ({\bf v}_{3}(0)-{\bf v}_{4}(0))\neq 0$.
As we already know, this inequality implies that the dihedral angle at 
the edge~$C_2(t)T_3$ is nonconstant.
\end{proof}

Note that in Lemma~\ref{lemma6} the edges $C_2T_2$ and $C_2T_3$ can be replaced 
by their images under the action of rotations~$K$, $K^2$ and $K^3$.
This means that Lemma \ref{lemma6} guarantees us that the eight dihedral angles 
of~$\mathscr{P}(t)$ are not constant as functions of~$t$.
This fact is reflected in the right column of Table~\ref{tab3}.

\begin{lemma}\label{lemma7}
Dihedral angles at edges $T_1T_3$ and $T_2T_4$ of polyhedron $\mathscr{P}(t)$ 
are not constant as functions of $t$ on some sufficiently small interval 
in $\mathbb{R}$ containing 0.
\end{lemma}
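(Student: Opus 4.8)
The plan is to treat both edges exactly as in the proof of Lemma~\ref{lemma6}, i.e. to produce the infinitesimal data required by inequality~(\ref{eq7}); the only genuinely new work is the correct identification of the two faces meeting at each edge. The point to keep in mind is that these two faces will turn out to lie in \emph{different} octahedral copies, so that the diagonal argument of Lemma~\ref{lemma5} is unavailable and the infinitesimal approach is really needed.

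First I would determine the faces of $\mathscr{P}$ incident to $T_1T_3$. In $\mathscr{M}$ the edge $T_1T_3$ bounds the faces $T_1T_3B_1$ and $T_1T_3T_4$, but the second of these is precisely the face along which $\mathscr{M}$ and $K^3(\mathscr{M})$ were glued together (see \S~\ref{sec6}) and therefore disappears in $\mathscr{P}$. Its place is taken by the face of $K^3(\mathscr{M})$ adjacent to $T_1T_3$, namely $T_1T_3A'''_2$ (cf. Table~\ref{tab4}). Setting ${\bf p}_1(t)\equiv T_1$, ${\bf p}_2(t)\equiv T_3$, ${\bf p}_3(t)=B_1(t)$, ${\bf p}_4(t)=A'''_2(t)$, I would check against Table~\ref{tab3} that the five distances $|T_1T_3|$, $|T_1B_1|$, $|T_3B_1|$, $|T_1A'''_2|$, $|T_3A'''_2|$ are all edge lengths of $\mathscr{P}$, hence constant during the flex, while $\{B_1,A'''_2\}$ is not an edge and so plays the role of the free pair $\{{\bf p}_3,{\bf p}_4\}$ in~(\ref{eq6}). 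Since the dihedral angle at $T_1T_3$ is exactly the angle between the triangles ${\bf p}_1{\bf p}_2{\bf p}_3$ and ${\bf p}_1{\bf p}_2{\bf p}_4$, it then suffices to verify~(\ref{eq7}).

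To evaluate~(\ref{eq7}) I would assemble the velocities at $t=0$. As $T_1$ and $T_3$ are fixed during the flex, ${\bf v}_1(0)={\bf v}_2(0)=(0,0,0)$. The vector ${\bf v}_{B_1}(0)$ is found, just as ${\bf v}_3(0)$ in Lemma~\ref{lemma6}, by solving the $3\times 3$ linear system expressing that $B_1$ keeps fixed distances to its neighbours $T_1$, $T_3$, $C_2$, with $\dot C_2(0)=(0,0,1)$. The velocity ${\bf v}_{A'''_2}(0)$ needs no new system: by the $K$-invariance of the special flex one has $A'''_2(t)=K^3(A_2(t))$, whence ${\bf v}_{A'''_2}(0)=K^3{\bf v}_3(0)$ with ${\bf v}_3(0)$ already computed in Lemma~\ref{lemma6}. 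Feeding the exact radical coordinates of Table~\ref{tab2} and these velocities into \textit{Mathematica}, I would evaluate $(B_1-A'''_2)\cdot({\bf v}_{B_1}-{\bf v}_{A'''_2})$ symbolically; it comes out nonzero (numerically about $-0.66$), and~(\ref{eq7}) then yields that the dihedral angle at $T_1T_3$ is nonconstant.

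The edge $T_2T_4$ is handled identically: tracking the gluing of $\mathscr{M}$ and $K(\mathscr{M})$ along the face $T_1T_2T_4$ shows that in $\mathscr{P}$ the edge $T_2T_4$ is incident to $T_2T_4\overline{A}_2$ and $T_2T_4\overline{B}'_1$, so I would take ${\bf p}_3=\overline{A}_2$, ${\bf p}_4=\overline{B}'_1$, use ${\bf v}_{\overline{A}_2}(0)={\bf v}_4(0)$ from Lemma~\ref{lemma6} together with ${\bf v}_{\overline{B}'_1}(0)=K\,{\bf v}_{\overline{B}_1}(0)$, and verify in \textit{Mathematica} that $(\overline{A}_2-\overline{B}'_1)\cdot({\bf v}_{\overline{A}_2}-{\bf v}_{\overline{B}'_1})\neq 0$ (numerically about $0.66$). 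Finally, each of $T_1T_3$ and $T_2T_4$ is a representative of a four-element orbit of edges under the rotation $K$, and since $K$ carries $\mathscr{P}(t)$ to itself and preserves dihedral angles, the nonconstancy propagates to all eight edges marked with the number~\ref{lemma7} in Table~\ref{tab3}. I expect the only delicate point to be the bookkeeping in the second paragraph: one must verify that the ``obvious'' faces $T_1T_3T_4$ and $T_1T_2T_4$ are exactly the ones destroyed by the gluing, so that the surviving faces come from two distinct copies of $\mathscr{M}$ and the framework is set up with the correct apex vertices $A'''_2$ and $\overline{B}'_1$.
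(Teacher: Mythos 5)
Your proposal is correct and follows exactly the route the paper intends: the paper's own ``proof'' of Lemma~\ref{lemma7} is just the remark that it goes through as in Lemma~\ref{lemma6} and is left to the reader, and you have supplied precisely that computation, with the right incident faces ($T_1T_3B_1$, $T_1T_3A'''_2$ and $T_2T_4\overline{A}_2$, $T_2T_4\overline{B}'_1$, consistent with Table~\ref{tab4}), the correct use of $K$-invariance to recycle the velocities from Lemma~\ref{lemma6}, and numerical values of~(\ref{eq10})-type expressions ($\approx \mp 0.66$) that check out against the data of Tables~\ref{tab1} and~\ref{tab2}. The extension to all eight edges via the four-element $K$-orbits also matches the paper's bookkeeping in Table~\ref{tab3}.
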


\begin{proof}
Lemma~\ref{lemma7} can be proven similarly to Lemma~\ref{lemma6}. 
The proof is left to the reader.
\end{proof}

Note that in Lemma~\ref{lemma7} the edges $T_1T_3$ and $T_2T_4$ can be replaced 
by their images under the action of rotations~$K$, $K^2$ and $K^3$.
This means that Lemma \ref{lemma7} guarantees us that the eight dihedral angles 
of~$\mathscr{P}(t)$ are not constant as functions of~$t$.
This fact is reflected in the right column of Table~\ref{tab3}.

Thus, Lemmas ~\ref{lemma5}, \ref{lemma6} and~\ref{lemma7} imply that all 72 
dihedral angles of~$\mathscr{P}(t)$ are not constant as functions of $t$ 
on some sufficiently small interval in $\mathbb{R}$ containing 0.

\section{Proof of Theorem~\ref{thrm1} and concluding remarks}\label{sec8} 

\begin{proof} All components of the proof of Theorem~\ref{thrm1} 
have already appeared in the previous Sections. 
We just need to put them together.

Let $\mathscr{P}$ be the polyhedron constructed in~\S~\ref{sec6}.
According to Lemma~\ref{lemma3}, $\mathscr{P}$ has no self-intersections,
is homeomorphic to a sphere, has only triangular faces, and is flexible.
Thus, statement (i) of Theorem~\ref{thrm1} is fulfilled for~$\mathscr{P}$.

In \S~\ref{sec7} a special flex $\mathscr{P}(t)$ of~$\mathscr{P}$ is constructed.
According to Lemmas ~\ref{lemma5}, \ref{lemma6} and \ref{lemma7}, there is a 
sufficiently small neighborhood of 0 in $\mathbb{R}$ in which none of the 
dihedral angles of~$\mathscr{P}(t)$ is constant as a function of $t$.
Reducing the range of~$t$ to the segment $[0,\beta)$ completely contained in 
this neighborhood, we obtain the family of polyhedra 
$\{\mathscr{P}(t)\}_{t\in [0, \beta)}$, 
whose existence proves statement (ii) of Theorem~\ref{thrm1}.
\end{proof} 

Let us formulate two open Problems~\ref{probl2}
and~\ref{probl3}, closely related to Problem~\ref{probl1}.

\begin{probl}\label{probl2}
Is there a one-parametric flexible polyhedron in $\mathbb{R}^3$, 
without boundary and without self-intersections, for which all dihedral 
angles change during the flex?
\end{probl} 

The concept of a one-parametric flexible polyhedron is intuitively obvious. 
This is exactly how it was used in~\cite{Sh15}.
The definition of a $p$-parametric flexible polyhedron requires clarification. 
It is strictly formulated in \cite{MS02}.
For our purposes, it is sufficient to have an intuitive understanding that 
flexible Steffen polyhedron $\mathscr{S}$ is one-parametric, and flexible 
polyhedron $\mathscr{P}$ constructed in \S~\ref{sec6} is 4-parametric.
 
Problem~\ref{probl2} seems interesting to us due to the fact that in mathematics 
there are many situations where the behavior of an object changes greatly 
depending on whether it depends on one parameter or on several.

It is obvious that the statement ``small diagonal $AB$ of a polyhedron $\mathscr{R}$ 
does not change during the flex'' is equivalent to the statement ``the dihedral 
angle at the common edge of those two faces of~$\mathscr{R}$ which contain $A$ and $B$ 
does not change during the flex''.
Therefore, the following Problem~\ref{probl3} is a generalization of 
Problem~\ref{probl1}.

\begin{probl}\label{probl3}
Is there a flexible polyhedron in $\mathbb{R}^3$, without boundary and without 
self-intersections, for which all diagonals change during the flex?
\end{probl} 

Examples show that Problem~\ref{probl3} is non-trivial.
Indeed, Bricard octahedra have self-intersections and have no diagonals whose 
lengths do not change during the flex (see Lemma~\ref{lemma1}).
On the other hand, Steffen flexible polyhedron $\mathscr{S}$, modified Steffen 
flexible polyhedron $\mathscr{M}$, and flexible polyhedron~$\mathscr{P}$ constructed 
in \S~\ref{sec6} have no self-intersections, but have diagonals whose lengths do 
not change during the flex (namely, $T_2T_3$ for $\mathscr{S}$ and $\mathscr{M}$; 
and, for example, $T_1T_4$ and $T_2T_3$ for $\mathscr{P}$).
 
An additional interest to Problem~\ref{probl3} is added by the fact that, 
unlike Problems~\ref{probl1} and~\ref{probl2}, it makes sense not only 
for orientable polyhedra.
Note also that Problem~\ref{probl3} can be considered as a special case of the 
problem of which lengths of diagonals of a flexible polyhedron must be fixed 
so that it ceases to be flexible.
Various aspects of the latter problem have been studied, for example, 
in~\cite{MS02}, \cite{Sa02}, \cite{Sa11}.

Note that analogues of Problems~\ref{probl1}--~\ref{probl3} can be posed not only 
in~$\mathbb{R}^3$, but also in any space of constant curvature of dimension $\geq 3$, 
as well as in spaces with an indefinite metric.

In conclusion, let us clarify that all symbolic calculations performed in the 
preparation of this article were performed using the computer software system
\textit{Mathematica} 12.1 \cite{Wo99}, license 3322--8225.

\subsection*{Acknowledgments} 
The authors thank N.P.~Dolbilin, A.A.~Gaifullin and I.Kh.~Sabitov for their 
interest in this study and useful discussions.

\subsection*{Funding}
The work was carried out within the State Task to the Sobolev Institute 
of Mathematics (namely, V.A. Alexandrov was supported by Project FWNF--2022--0006 and 
E.P. Volokitin was supported by Project FWNF--2022--0005).

\end{document}